%%%%%%%%%%%%%%%%%%%%%%%%%%%%%%%%%%%%%%%%
%%%%%%%%%%%%%%%%%%%%%%%%%%%%%%%%%%%%%%%%%
\documentclass[10pt,leqno]{amsart}

\textheight 8in
\textwidth 5.5 in
\voffset -0.3in
\hoffset -0.6in

\usepackage{amsmath}
\usepackage{amsfonts}
\usepackage{amssymb}
\usepackage{graphicx}
\usepackage{color}
\usepackage{hyperref}
\parindent 15pt % for 11 pt; 15pt when the default text size is 10pt , 17pt for 11pt, and 1.5em for 12pt
\parskip 4pt

\usepackage{xcolor}

\newtheorem{theorem}{Theorem}[section]
\newtheorem{thm}[theorem]{Theorem}
\newtheorem*{theorem*}{Theorem}
\newtheorem{lemma}{Lemma}[section]
\newtheorem{corollary}[theorem]{Corollary}
\newtheorem{proposition}{Proposition}[section]

\newtheorem{definition}[theorem]{Definition}

\newtheorem{remark}[theorem]{Remark}

%%%%%%%%% Proof  environment

\def \b {\beta}

\def\Ric{\text{Ric}}

\def\a{\alpha}
\def\l{\lambda}

\def\g{\gamma}

\def\p{\partial}

\def\R{\mathbb{R}}

\def\vp{\varphi}

\def\k{\kappa}

\def\L{{\mathcal L}}

\def\w{\omega}

\def\Ric{\operatorname{Ric}}

\def\n{\nabla}

               %%% probability measures
  
%\newcommand{\CP}{{\mathbb{C}\mathbb{P}}}
%\newcommand{\CH}{{\mathbb{CH}} %%% with finite second moments

\numberwithin{equation}{section}

\begin{document}

%\title[the first eigenvalue of the Laplacian]{Lower bounds for the first eigenvalue of the Laplacian on Quarternion-K\"ahler manifolds}

\title[Eigenvalue Estimates on quaternion-K\"ahler Manifolds]{Eigenvalue Estimates on quaternion-K\"ahler Manifolds }

\author{Xiaolong Li}
\address{Department of Mathematics and Statistics, McMaster University, Hamilton, Ontario, L8S 4K1, Canada}
\email{li1304@mcmaster.ca}

\author{Kui Wang} \thanks{The research of the second author is supported by NSFC No.11601359} 
\address{School of Mathematical Sciences, Soochow University, Suzhou, 215006, China}
\email{kuiwang@suda.edu.cn}

%\author{Haotian Wu}%\thanks{The research of the third author is supported by ARC Grant DE180101348}
%\address{School of Mathematics and Statistics, The University of Sydney, NSW 2006, Australia}
%\email{haotian.wu@sydney.edu.au}

%\date{\today}
\subjclass[2020]{35P15, 53C26}

%35P30, 35K55, 49R05, 58C40, 58J50, 53C26, 53C55}
%\subjclass[2010]{32Q15, 35P30, 35K55, 49R05, 58C40, 58J50, 53C26, 53C55}
\keywords{Quarternion-K\"ahler manifold, eigenvalue comparison, modulus of continuity, orthogonal Ricci curvature}

\begin{abstract}
We prove lower bound for the first closed or Neumann nonzero eigenvalue of the Laplacian on a compact quaternion-K\"ahler manifold in terms of dimension, diameter, and scalar curvature lower bound. It is derived as large time implication of the modulus of continuity estimates for solutions of the heat equation.
We also establish lower bound for the first Dirichlet eigenvalue in terms of geometric data, via a Laplace comparison theorem for the distance to the boundary function. 
\end{abstract}

\maketitle

\section{Introduction}
Let $(M^n, g)$ be a compact $n$-dimensional Riemannian manifold (possibly with a smooth nonempty boundary). Denote by $\Delta$ the Laplace-Beltrami operator associated to the metric $g$. It is well-known that the spectrum of $\Delta$ (the Neumann boundary condition is imposed if $\p M$ is non-empty) consists of pure points spectrum that can be arranged in the order 
$$0=\mu_0<\mu_1 \leq \mu_2 \leq \cdots  \to \infty.$$
The study of the first nonzero eigenvalue $\mu_1$ is an important issue in both mathematics and physics. 
In particular, the problem of establishing lower bounds for $\mu_1$ in terms of geometric data of the manifold received considerable attention in the past few decades and a number of results have been obtained by various authors (see for example \cite{Chavel84}\cite{SYbook}\cite{BQ00}\cite{LL10}).  
For instance, a classical result of Lichnerowicz states that $\mu_{1} \geq n\k$
if $M$ is a closed $n$-dimensional Riemannian manifold with $\Ric \geq (n-1)\k >0$. This follows easily by integrating the Bochner formula. The rigidity was observed by Obata \cite{Obata62}, who showed that the equality occurs if and only if $M$ is isometric to the round sphere of radius $1/\sqrt{\k}$. 
%See also \cite[Theorem 5.1]{Libook}.
In the nonnegative Ricci curvature case, by refining the gradient estimates of Li \cite{Li79} and Li and Yau \cite{LY80}, 
Zhong and Yang \cite{ZY84} proved the sharp lower bound $\mu_1 \geq \frac{\pi^2}{D^2}$, where $D$ denotes the diameter of $M$. Moreover, it was proved by Hang and Wang \cite{HW07} that the equality happens if and only if $M$ is a circle of radius $D/\pi$ or the interval $[-D/2,D/2]$.

Both Lichnerowicz and Zhong-Yang's results are special cases of the following theorem, which provides sharp lower bound for $\mu_1$ depending on dimension, Ricci curvature lower bound, and diameter.

\begin{thm}\label{thm Riem}
Let $(M^n,g)$ be a compact Riemannian manifold (possibly with a smooth convex boundary) with diameter $D$ and $\Ric \geq (n-1)\k$ for $\k \in \R$. 
Let $\mu_1$ be the first nonzero eigenvalue of the Laplacian on $M$ (with Neumann boundary condition if $\p M \neq \emptyset$).
Then
\begin{equation*}
    \mu_1 \geq \bar{\l}_1(n,\k,D),
\end{equation*}
where $\bar{\l}_1(n,\k,D)$ is the first nonzero Neumann eigenvalue of the one-dimensional eigenvalue problem
\begin{equation*}
  \vp''-(n-1)T_\k \vp' =-\l \vp
\end{equation*}
on the interval $[-D/2,D/2]$, and $T_\k$ is defined in \eqref{def T}.
\end{thm}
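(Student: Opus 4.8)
The plan is to deduce Theorem~\ref{thm Riem} as the large-time consequence of a \emph{modulus of continuity estimate} for the heat equation, in the spirit of Andrews--Clutterbuck and Andrews--Ni. Let $u(x,t)$ solve $\heat u=0$ on $M\times(0,\infty)$ with $\p_\nu u=0$ on $\p M$, and let $\vp(s,t)$ solve the one-dimensional model equation $\p_t\vp=\vp''-(n-1)T_\k\,\vp'$ on $[-D/2,D/2]\times(0,\infty)$ with Neumann conditions at $s=\pm D/2$, where $T_\k$ is as in \eqref{def T}; since $T_\k$ is odd, oddness of $\vp(\cdot,t)$ is preserved in $t$, and monotonicity is preserved by the maximum principle. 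The central claim is: if $\vp(\cdot,0)$ is odd and nondecreasing with $\vp'>0$ on $(-D/2,D/2)$, and is an initial modulus of continuity for $u$, i.e.
\[
u(y,0)-u(x,0)\le 2\,\vp\big(\tfrac{\dist(x,y)}{2},0\big)\qquad\text{for all }x,y\in M,
\]
then the same inequality holds at every time $t\ge 0$.

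I would prove the claim by a space-time maximum principle. Fix $\e>0$ and set $Z(x,y,t)=u(y,t)-u(x,t)-2\vp\big(\tfrac{\dist(x,y)}{2},t\big)-\e(1+t)$; then $Z<0$ at $t=0$, so if the claim fails there is a first time $t_0>0$ at which $Z$ attains the value $0$, say at $(x_0,y_0)$, which is then a spatial maximum. Strict monotonicity excludes $x_0=y_0$; if $x_0$ and $y_0$ are conjugate or cut points one replaces $\dist(\cdot,y_0)$ and $\dist(x_0,\cdot)$ near these points by the smooth upper barriers $\delta+\dist(\cdot,\g(\delta))$ and $\delta+\dist(\g(L-\delta),\cdot)$ with $\delta>0$ small, so we may assume $\dist$ is smooth near $(x_0,y_0)$, with minimizing unit-speed geodesic $\g$ of length $L=\dist(x_0,y_0)\le D$. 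The first spatial derivative test yields $\n u(x_0)=\vp'(L/2)\dot\g(0)$ and $\n u(y_0)=\vp'(L/2)\dot\g(L)$. Testing the negative semidefinite spatial Hessian of $Z$ on the $n$ directions obtained from $\dot\g$ and from $n-1$ parallel vector fields along $\g$ (doubled at the two endpoints) and summing, the second variation of arc length together with $\Ric\ge(n-1)\k$ (i.e.\ the index-form / one-dimensional Laplacian comparison) gives
\[
\D u(y_0)-\D u(x_0)\ \le\ 2\big(\vp''-(n-1)T_\k\,\vp'\big)(L/2,t_0).
\]
Since $\p_t\vp=\vp''-(n-1)T_\k\vp'$, this forces $\p_t Z(x_0,y_0,t_0)\le-\e<0$, contradicting that the spatial maximum of $Z$ rises to $0$ at $t_0$. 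If $x_0$ or $y_0$ lies on $\p M$, convexity of $\p M$ makes the minimizing geodesic point into $M$, whence the outward normal derivative of $Z$ there is $\le 0$, which is incompatible with a boundary maximum. Letting $\e\downarrow0$ proves the claim.

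To conclude, let $\Phi$ be a first eigenfunction of the one-dimensional problem; the reflection symmetry $s\mapsto-s$ and Sturm--Liouville theory show $\Phi$ is odd, $\Phi'(0)>0$, and $\Phi'>0$ on $(-D/2,D/2)$. Let $u_1$ be a first Neumann eigenfunction on $M$, $\D u_1=-\mu_1 u_1$. Since $u_1$ is Lipschitz and $\Phi(\dist(x,y)/2)$ vanishes only on the diagonal, where it is comparable to $\dist(x,y)$, the constant $C:=\sup_{x\neq y}\frac{u_1(y)-u_1(x)}{2\,\Phi(\dist(x,y)/2)}$ is finite. Then $u(x,t)=e^{-\mu_1 t}u_1(x)$ solves the heat equation, $\vp(s,t)=C e^{-\bar{\l}_1 t}\Phi(s)$ solves the model equation, and the choice of $C$ is exactly the hypothesis of the claim at $t=0$. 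Hence for all $x,y\in M$ and $t>0$,
\[
e^{-\mu_1 t}\big(u_1(y)-u_1(x)\big)\le 2C\,e^{-\bar{\l}_1 t}\,\Phi\big(\tfrac{\dist(x,y)}{2}\big).
\]
If $\mu_1<\bar{\l}_1$, dividing by $e^{-\mu_1 t}$ and letting $t\to\infty$ gives $u_1(y)\le u_1(x)$ for all $x,y$, so $u_1$ is constant, contradicting $\mu_1>0$. Therefore $\mu_1\ge\bar{\l}_1(n,\k,D)$.

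The principal obstacle is the Hessian estimate at the interior maximum: one must verify that the full second-order contribution of $2\vp(\dist(x,y)/2)$, after tracing over the parallel frame along the minimizing geodesic at both endpoints, is dominated by the model operator $\vp''-(n-1)T_\k\vp'$, which is precisely the point at which $\Ric\ge(n-1)\k$ is invoked through the index-form comparison. The remaining technical issues — failure of smoothness of $\dist$ on the cut locus, regularity of the comparison at the endpoints $s=\pm D/2$ (where $T_\k$ may blow up while $\vp'$ vanishes), and the boundary contributions — are handled respectively by the upper barriers above, by the structure of the model ODE, and by the convexity hypothesis on $\p M$.
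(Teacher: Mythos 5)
Your proof is correct and follows the modulus-of-continuity strategy of Andrews--Clutterbuck \cite{AC13}, which is exactly the approach the paper cites for Theorem~\ref{thm Riem} and which the paper itself carries out (in the viscosity-solution formulation) for the quaternion-K\"ahler analogue in Section~3. The steps — the $\e$-perturbed space--time maximum principle, the first/second variation of arclength tested on the geodesic direction and a parallel transverse frame with test functions $c_\kappa(s)/c_\kappa(L/2)$, the reduction to the index-form comparison via $\Ric\geq(n-1)\kappa$, the convexity argument to exclude boundary maxima, and the large-time comparison of $e^{-\mu_1 t}u_1$ against $Ce^{-\bar\lambda_1 t}\Phi$ — all match the paper's method.
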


Theorem \ref{thm Riem} was first proved independently by Kr\"oger \cite{Kroger92} using gradient estimate method and by Chen and Wang \cite{CW94} using stochastic methods. The above explicit statement appeared first in the work of Bakry and Qian \cite{BQ00}, who also extended Theorem \ref{thm Riem} to the the setting of smooth metric measure spaces using gradient estimates. In 2013, Andrews and Clutterbuck  \cite{AC13}  gave a simple proof using modulus of continuity estimates (see also \cite{ZW17} for an elliptic argument based on \cite{AC13} and \cite{Ni13}).
The sharpness can be seen by constructing a sequence of Riemannian manifolds with $\Ric \geq (n-1) \k$, which geometrically collapse to the interval $[-D/2,D/2]$ (see for example \cite{AC13}).

However, there are very few results specifically for K\"ahler manifolds. 
Lichnerowicz \cite{Lichnerowicz58} showed that if $M$ is closed K\"ahler manifold with $\Ric \geq (n-1)\k >0$, then $\mu_1 \geq 2(n-1)\k$. 
Notice that this is a remarkable improvement of his well-known result in the Riemannian case. 
Lichnerowicz's proof makes use of a complex version Bochner formula (see also \cite[Theorem 6.14]{Ballmannbook}). A different proof using harmonic maps was given by Urakawa \cite{Urakawa87}. 
It is only until recently that the authors of the present paper established the following analogue of Theorem \ref{thm Riem} for K\"ahler manifold in \cite{LW21TAMS}.

\begin{thm}\label{thm Kahler}
Let $(M^m, g, J)$ be a compact K\"ahler manifold of complex dimension $m$ and diameter $D$, whose holomorphic sectional curvature is bounded from below by $4\k_1$ and orthogonal Ricci curvature is bounded from below by $2(m-1)\k_2$ for some $\k_1,\k_2 \in \R$.    
Let $\mu_{1}$ be the first nonzero eigenvalue of the Laplacian on $M$ (with Neumann boundary condition if $M$ has a strictly convex boundary). Then 
\begin{equation*}
    \mu_{1} \geq \bar{\mu}_{1}(m,\k_1,\k_2,D),
\end{equation*}
where $\bar{\mu}_{1}(m,\k_1,\k_2,D)$ is the first Neumann eigenvalue of the one-dimensional eigenvalue problem %$\L$ defined in \eqref{eq 1D} 
\begin{equation*}
    \vp''-\left(2(m-1)T_{\k_2}+T_{4\k_1} \right)\vp' =-\l \vp
\end{equation*}
on $[-D/2,D/2]$, and $T_\k$ is defined in \eqref{def T}.
\end{thm}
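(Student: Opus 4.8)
The plan is to follow the parabolic modulus-of-continuity scheme of Andrews--Clutterbuck \cite{AC13}, using the complex structure $J$ to split the tangent bundle along geodesics into a line carrying the holomorphic sectional curvature and a subbundle carrying the orthogonal Ricci curvature; this is exactly what replaces the Riemannian drift $(n-1)T_\kappa$ by $2(m-1)T_{\kappa_2}+T_{4\kappa_1}$. Write $\dim_\RR M=2m$, $D=\diam(M)$, and $\Theta(s)=2(m-1)T_{\kappa_2}(s)+T_{4\kappa_1}(s)$. The heart of the argument is the following preservation statement: if $u$ solves $\p_t u=\Delta u$ on $M\times(0,\infty)$ (Neumann on $\p M$, if any), if $\varphi\colon[0,D/2]\times[0,\infty)\to\RR$ solves $\p_t\varphi=\varphi''-\Theta(s)\varphi'$ with $\varphi(0,t)=0$ and $\varphi'\ge0$, and if $u(y,0)-u(x,0)\le 2\varphi(\tfrac12\dist(x,y),0)$ for all $x,y\in M$, then $u(y,t)-u(x,t)\le 2\varphi(\tfrac12\dist(x,y),t)$ for all $x,y$ and all $t\ge0$. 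Granting this, Theorem \ref{thm Kahler} follows by letting $t\to\infty$: take $u(\cdot,t)=e^{-\mu_1 t}\psi$ with $\psi$ a first (Neumann) eigenfunction, let $\bar\varphi$ be the first Neumann eigenfunction of $\varphi''-\Theta\varphi'=-\l\varphi$ on $[-D/2,D/2]$ (taken odd and increasing, with eigenvalue $\bar\mu_1=\bar\mu_1(m,\kappa_1,\kappa_2,D)$), pick $C>0$ so that $\psi(y)-\psi(x)\le 2C\bar\varphi(\tfrac12\dist(x,y))$ at $t=0$ (possible since $\bar\varphi'(0)>0$, $\bar\varphi>0$ on $(0,D/2]$, and $\psi$ is Lipschitz), and apply the preservation statement to $\varphi(s,t)=Ce^{-\bar\mu_1 t}\bar\varphi(s)$; choosing $x,y$ with $\psi(x)=\min\psi$, $\psi(y)=\max\psi$ gives $(\max\psi-\min\psi)e^{-\mu_1 t}\le 2C\bar\varphi(D/2)e^{-\bar\mu_1 t}$ for all $t>0$, hence $\mu_1\ge\bar\mu_1$.

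To prove the preservation statement I would argue by the maximum principle applied to $Q(x,y,t)=u(y,t)-u(x,t)-2\varphi(\tfrac12\dist(x,y),t)$ on $M\times M\times[0,\infty)$. After the standard perturbation (replace $\varphi$ by $\varphi+\eps(1+t)$ and let $\eps\to0$ at the end), suppose $Q$ first attains the value $0$ at a time $t_0>0$ at a spatial maximum $(x_0,y_0)$ with $x_0\ne y_0$. Let $\gamma\colon[0,L]\to M$ be a unit-speed minimizing geodesic from $x_0$ to $y_0$, $L=\dist(x_0,y_0)$, $s_0=L/2$; the cut locus is handled by the usual upper-support-function device and, when $x_0$ or $y_0\in\p M$, the strict convexity of $\p M$ forces the maximum to be interior, exactly as in \cite{AC13}\cite{LW21TAMS}. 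The conditions $\n_xQ=\n_yQ=0$ give $\n u(x_0)=\varphi'(s_0)\gamma'(0)$ and $\n u(y_0)=\varphi'(s_0)\gamma'(L)$, and $\operatorname{Hess}_{(x,y)}Q\le0$ at $(x_0,y_0)$. Now use $J$ to decompose, for each $s$, $T_{\gamma(s)}M=\RR\gamma'(s)\oplus\RR J\gamma'(s)\oplus V(s)$ with $V(s)$ the ($J$-invariant) orthogonal complement; since $\n J=0$ and $\gamma$ is a geodesic, $\gamma'$, $J\gamma'$ and a chosen orthonormal frame $f_1,\dots,f_{2m-2}$ of $V$ are all parallel along $\gamma$, so $\{\gamma',J\gamma',f_1,\dots,f_{2m-2}\}$ is a parallel orthonormal frame of $TM$ along $\gamma$ and $\Delta u=\operatorname{Hess}u(\gamma',\gamma')+\operatorname{Hess}u(J\gamma',J\gamma')+\sum_j\operatorname{Hess}u(f_j,f_j)$ at $x_0$ and at $y_0$.

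Next I would extract from $\operatorname{Hess}_{(x,y)}Q\le0$ three estimates, in each case bounding a second variation of $\dist$ by the index form of a comparison field $w(s)e_i(s)$, where $w$ solves $w''+\kappa w=0$ with $w(0)=w(L)=1$ — its index form equals $-2T_\kappa(s_0)$, which is the normalization of $T_\kappa$ in \eqref{def T}, and the field is nondegenerate since a minimizing $\gamma$ has no conjugate points (the hypotheses also bound $D$ by a Bonnet--Myers-type estimate, covering the borderline). Namely: (i) testing $\operatorname{Hess}_{(x,y)}Q$ on the $2$-plane spanned by $(\gamma'(0),0)$ and $(0,\gamma'(L))$ — whose $2\times2$ matrix involves only the vanishing Hessians of $\dist$ in the geodesic direction and the cross term $\tfrac12\varphi''$ — negative semidefiniteness (the $2\times2$ determinant inequality, strictly sharper than the trace bound, and the mechanism already present in the one-dimensional Zhong--Yang argument) gives $\operatorname{Hess}_y u(\gamma'(L),\gamma'(L))-\operatorname{Hess}_x u(\gamma'(0),\gamma'(0))\le 2\varphi''$; (ii) testing along $r\mapsto(\exp_{x_0}(rJ\gamma'(0)),\exp_{y_0}(rJ\gamma'(L)))$ and using that $\Sect(J\gamma',\gamma')$ is the holomorphic sectional curvature of the complex line $\spn_\RR\{\gamma',J\gamma'\}$, hence $\ge4\kappa_1$, gives $\operatorname{Hess}_y u(J\gamma'(L),J\gamma'(L))-\operatorname{Hess}_x u(J\gamma'(0),J\gamma'(0))\le -2T_{4\kappa_1}(s_0)\,\varphi'$; (iii) testing along $r\mapsto(\exp_{x_0}(rf_j(0)),\exp_{y_0}(rf_j(L)))$, summing over $j$ with a common $w$, and using that $\sum_{j=1}^{2m-2}\Sect(f_j,\gamma')=\Ric(\gamma',\gamma')-\Sect(J\gamma',\gamma')$ is by definition the orthogonal Ricci curvature of $\gamma'$, hence $\ge2(m-1)\kappa_2$, gives $\sum_{j=1}^{2m-2}\big(\operatorname{Hess}_y u(f_j(L),f_j(L))-\operatorname{Hess}_x u(f_j(0),f_j(0))\big)\le -4(m-1)T_{\kappa_2}(s_0)\,\varphi'$. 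Adding (i)--(iii) and using $\p_t\varphi=\varphi''-\Theta\varphi'$ at $s_0$, the two $2\varphi''$ terms and the two drift terms cancel, so $\p_t Q(x_0,y_0,t_0)=\Delta u(y_0)-\Delta u(x_0)-2\p_t\varphi(s_0,t_0)\le0$ — strictly, after the perturbation — contradicting $\p_t Q\ge0$ at a first contact time. This establishes the preservation statement, and with it the theorem.

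Since the Riemannian scheme of \cite{AC13} and its K\"ahler precursor \cite{LW21TAMS} are available, the main point needing care is the curvature accounting in (ii)--(iii): one must verify that $J\gamma'$, being parallel and orthogonal to $\gamma'$, can be treated as a single comparison direction whose sectional curvature is the holomorphic sectional curvature, and that the residual $2m-2$ transverse directions carry precisely the orthogonal Ricci curvature, so that the total transverse drift is exactly $2(m-1)T_{\kappa_2}+T_{4\kappa_1}$ and matches the one-dimensional model — this is essentially why the orthogonal Ricci curvature, rather than the full Ricci curvature, is the correct hypothesis. The remaining ingredients — the $2\times2$ (rather than trace) estimate in the geodesic direction, the treatment of the cut locus, the admissibility of the test curves near $\p M$ via strict convexity, and the nondegeneracy of the comparison fields — are handled exactly as in the Riemannian and K\"ahler predecessors.
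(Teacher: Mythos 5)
Your proposal is correct and follows essentially the same route as the paper's (i.e.\ the argument of \cite{LW21TAMS}, mirrored in Section 3 here for the quaternion-K\"ahler case): a two-point modulus-of-continuity maximum principle along a minimizing geodesic, with the parallel splitting $\RR\gamma'\oplus\RR J\gamma'\oplus V$ feeding the holomorphic sectional curvature into a $T_{4\k_1}$ drift and the orthogonal Ricci curvature into a $2(m-1)T_{\k_2}$ drift, followed by the standard large-time eigenvalue argument; the only cosmetic differences are that you treat the heat equation directly via a comparison/preservation statement rather than the viscosity-subsolution formulation for general isotropic quasilinear flows. One small wording correction: in step (i) the inequality $u_{11}(y_0)-u_{11}(x_0)\le 2\vp''$ comes from evaluating the (negative semidefinite) second-variation quadratic form at the single direction $\left(-\gamma'(0),\gamma'(L)\right)$ (equivalently the variation $\gamma_0\left(s+r\frac{2s-1}{L}\right)$ used in the paper), not from a $2\times 2$ determinant inequality.
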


Theorem \ref{thm Kahler} provides the first diameter-depending lower bound for $\mu_1$ for K\"ahler manifolds.
Its proof uses the modulus of continuity approach of Andrews and Clutterbuck \cite{AC13}.
%, which was introduced and used by Andrews and Clutterbuck \cite{AC13} to give a simple proof of Theorem \ref{thm Riem}. 
The key idea in taking the K\"ahlarity into consideration is that the Ricci curvature can be decomposed as the sum of holomorphic sectional curvature and orthogonal Ricci curvature. The notion of orthogonal Ricci curvature was introduced recently by Ni and Zheng \cite{NZ18} in the study of comparison theorems on K\"ahler manifolds (see also \cite{NZ19} and \cite{NWZ18} for more results on orthogonal Ricci curvature).  
It turns out that holomorphic section curvature and orthogonal Ricci curvature are more suitable conditions for various comparison theorems on K\"ahler manifolds. On one hand, they reflect more on the K\"ahler structure and lead to sharper results than the Ricci curvature. On the other hand, they are weaker than the well-studied bisectional curvature lower bound, under which comparions theorems for K\"ahler manifold were obtained by Li and Wang \cite{LW05} and Tam and Yu \cite{TY12}. See also \cite{Liu14} for some comparison theorem for K\"ahler manifolds with Ricci curvature bounded from blow. 

Now let's turn to the quaternion-K\"ahler situation. When $M$ is a closed quaternion-K\"ahler manifold of quaternionic dimension $m\geq 2$ (i.e., the real dimension is $4m$),  it was shown by Alekseevsky and Marchiafava in \cite{AM95} via a Bochner-type formula for $1$-forms that 
$\mu_1 \geq 8(m+1)\kappa$, provided the scalar curvature is bound from below by $16m(m+2)\kappa >0$. Moreover, they showed that the equality characterize the quaternionic projective space. 
Given the results in Theorem \ref{thm Riem} and Theorem \ref{thm Kahler}, it is natural to ask whether one can prove lower bounds for $\mu_1$ that depends on the diameter for a quaterion-K\"ahler manifold.
%, just like in the Riemannian and K\"ahler cases stated in Theorem \ref{thm Riem} and \ref{thm Kahler}, respectively. 

The purpose of this paper is to establish analogous lower bounds for the first nonzero eigenvalue of the Laplacian on a compact quaternion-K\"ahler manifold. The main theorem states
\begin{thm}\label{thm eigenvalue QK}
Let $(M^m, g, I, J, K)$ be a compact quaternion-K\"ahler manifold (possibly with a smooth strictly convex boundary) of quaternionic dimension $m\geq 2$ and diameter $D$. Suppose that the scalar curvature of $M$ is bounded from below by $16m(m+2)\k$ for some $\k \in \R$. 
Let $\mu_1$ be the first nonzero eigenvalue of the Laplacian on $M$ (with Neumann boundary condition if $M$ has a smooth strictly convex boundary). Then 
\begin{equation*}
    \mu_1 \geq \bar{\mu}_1(m,\k,D)
\end{equation*}
where $\bar{\mu}_1(m,\k,D)$ is the first nonzero Neumann eigenvalue of the one-dimensional eigenvalue problem
\begin{equation*}\label{}
    \vp''-\left(4(m-1)T_\k +3T_{4\k}\right)\vp' =-\l \vp 
\end{equation*}
on $[-D/2,D/2]$, and $T_\k$ is defined in \eqref{def T}.
\end{thm}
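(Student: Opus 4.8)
The plan is to follow the modulus of continuity approach of Andrews and Clutterbuck \cite{AC13}, exactly as in the Riemannian case (Theorem \ref{thm Riem}) and the K\"ahler case (Theorem \ref{thm Kahler}), the key new input being a quaternion-K\"ahler Laplacian comparison for the distance function. The main point is that on a quaternion-K\"ahler manifold the Ricci curvature is parallel and the distance function has a distinguished behavior along a minimizing geodesic: the tangent direction $\p_r$ together with $I\p_r$, $J\p_r$, $K\p_r$ spans a four-dimensional "quaternionic line'' along which the curvature is controlled by the scalar curvature, while the remaining $4(m-1)$ directions are controlled by the orthogonal piece of the Ricci curvature. Concretely, I expect to prove that for a quaternion-K\"ahler manifold with $\Scal \geq 16m(m+2)\k$, the function $r=\dist(\cdot,p)$ satisfies, in the barrier or viscosity sense away from the cut locus, the inequality
\begin{equation*}
  \Delta r \leq 4(m-1)T_\k(r) + 3T_{4\k}(r),
\end{equation*}
where $T_\k$ is as in \eqref{def T}. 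This is the quaternionic analogue of the Ni--Zheng type comparison on K\"ahler manifolds: along the geodesic one uses the Riccati/index-form argument, taking as test Jacobi fields the parallel transports of $I\p_r,J\p_r,K\p_r$ (which contribute $3$ copies of the one-dimensional model with curvature $4\k$, since the quaternionic sectional curvatures in those planes are pinched by $\Scal/(4m(m+2))$) and an orthonormal frame of the orthogonal complement (which contributes $4(m-1)$ copies of the model with curvature $\k$, using that the orthogonal Ricci curvature is bounded below by $4(m-1)\k$ once the scalar curvature bound is assumed).

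Granting this comparison, the eigenvalue estimate proceeds in the standard three steps. First, let $u$ be the first eigenfunction (with Neumann condition if $\p M\neq\emptyset$), normalized by $\max u=-\min u$ after the usual translation, and let $\bar\vp$ be the first Neumann eigenfunction of the one-dimensional model operator $L\vp=\vp''-(4(m-1)T_\k+3T_{4\k})\vp'$ on $[-D/2,D/2]$ with eigenvalue $\bar\mu_1$; one normalizes $\bar\vp'>0$ on $(-D/2,D/2)$. Second, one shows that $\psi(x,y)=\bar\vp(\dist(x,y)/2)$ — or, after running the heat flow, a time-dependent version $C(t)\bar\vp$ — is a modulus of continuity for $u(\cdot,t)$: this is where the Laplacian comparison enters, because applying the two-point maximum principle to $Z(x,y,t)=u(y,t)-u(x,t)-2\psi$ and computing $(\p_t-\Delta_x-\Delta_y)Z$ at a first touching point, the second-order terms along the minimizing geodesic joining $x$ to $y$ are estimated by $\Delta r \leq 4(m-1)T_\k+3T_{4\k}$, exactly matching the one-dimensional operator, while the Neumann/convexity hypothesis kills the boundary contributions. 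Third, letting $t\to\infty$ and using that $u(\cdot,t)=e^{-\mu_1 t}u$ while the model solution decays like $e^{-\bar\mu_1 t}$, one concludes $\mu_1\geq\bar\mu_1$; this is the "large time implication'' alluded to in the abstract, and is identical to \cite{AC13}. Alternatively one can give the elliptic version following \cite{ZW17}.

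The step I expect to be the main obstacle is the Laplacian comparison itself — specifically, verifying that along a \emph{generic} minimizing geodesic the three planes $\spn\{\p_r,I\p_r\}$, $\spn\{\p_r,J\p_r\}$, $\spn\{\p_r,K\p_r\}$ genuinely have sectional curvature bounded below by $4\k$ under only a scalar curvature lower bound. In the K\"ahler case this works because $\Scal$ bounds the holomorphic sectional curvature after averaging, but quaternion-K\"ahler manifolds are Einstein with $\Ric=\frac{\Scal}{4m}g$, so the scalar curvature bound immediately gives $\Ric\geq 4(m+2)\k\, g$; the delicate point is the decomposition $\Ric(\p_r,\p_r)=\Rm(\p_r,I\p_r,I\p_r,\p_r)+\Rm(\p_r,J\p_r,J\p_r,\p_r)+\Rm(\p_r,K\p_r,K\p_r,\p_r)+\Ric^{\perp}(\p_r,\p_r)$ valid on quaternion-K\"ahler manifolds, together with the fact that on the quaternionic line the relevant curvature terms are all equal (a consequence of the $\operatorname{Sp}(1)$-action permuting $I,J,K$) and hence each equals $\tfrac13(\Ric-\Ric^\perp)(\p_r,\p_r)$. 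Pinning down the sharp constants so that the three-plus-$4(m-1)$ split reproduces precisely the stated one-dimensional operator, and handling the cut locus via the standard Calabi trick, is where the real work lies; the rest is a faithful transcription of the Andrews--Clutterbuck machinery already used for Theorems \ref{thm Riem} and \ref{thm Kahler}.
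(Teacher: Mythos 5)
Your overall strategy — modulus of continuity via the Andrews--Clutterbuck two-point maximum principle, with the $3+4(m-1)$ splitting of normal directions along a minimizing geodesic — is exactly what the paper does, and the identification of the one-dimensional model operator is correct. Two points in your write-up are off, however, and are worth flagging since they concern precisely the step you identify as "the real work."

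First, the displayed comparison $\Delta r \leq 4(m-1)T_\k(r) + 3T_{4\k}(r)$ for $r=\dist(\cdot,p)$ cannot be right: at $\k=0$ it reads $\Delta r \leq 0$, which fails even in flat space, and a one-point Laplacian comparison always produces cotangent-type, not tangent-type, drift. More importantly, the paper never uses a one-point Laplacian comparison in the Neumann/closed case; it uses the second variation of arc length for a \emph{two-endpoint} family of variations $\gamma(r,s)=\exp_{\gamma_0(s)}(r\eta(s)e_i(s))$ with the \emph{even} test profiles $\eta=c_{4\k}(s)/c_{4\k}(s_0)$ and $\zeta=c_\k(s)/c_\k(s_0)$, and these boundary-normalized cosines are what make $T_\k$ (tangent) appear. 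Your Step 2 is morally describing this, but the formula you wrote is the wrong object. (The one-point comparison in the paper is reserved for the Dirichlet case in Section 5, where $T_{\k,\Lambda}$ defined via $C_{\k,\Lambda}$ is used, not $T_\k$.)

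Second, you do \emph{not} need, and it is in general not true, that each of the three planes $\spn\{\gamma',I\gamma'\}$, $\spn\{\gamma',J\gamma'\}$, $\spn\{\gamma',K\gamma'\}$ individually has sectional curvature $\geq 4\k$; the $\operatorname{Sp}(1)$-action does not force the three sectional curvatures to be equal. What you need — and what the paper uses via its Lemma 2.4 (from Kong--Li--Zhou) — is only that the \emph{sum}
\begin{equation*}
  R(\gamma',X_I,\gamma',X_I)+R(\gamma',X_J,\gamma',X_J)+R(\gamma',X_K,\gamma',X_K)=12\k
\end{equation*}
holds for the \emph{parallel transports} $X_I,X_J,X_K$ of $I\gamma'(a),J\gamma'(a),K\gamma'(a)$ at every point of $\gamma$. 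This is exactly the statement that the quaternionic sectional curvature $Q(\gamma')$ is constant, combined with the fact that parallel transport preserves the rank-3 bundle $V=\spn\{I,J,K\}$ (this is part (2) of the definition of quaternion-K\"ahler). Since the same test profile $\eta$ is used for all three of $e_2,e_3,e_4$, only this sum enters the computation, and no pinching of individual sectional curvatures is required.

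With these two corrections, your argument is a faithful account of the paper's proof: derive the viscosity subsolution property of the modulus of continuity from the two-point second variation plus the curvature identity above, then extract the eigenvalue lower bound as the large-time decay rate.
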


To the best of our knowledge, Theorem \ref{thm eigenvalue QK} provides the first diameter-dependent lower bound for $\mu_1$ on a quaternion-K\"ahler manifold. It also seems to be the first lower bound for the Neumann boundary condition in the quaternion-K\"ahler setting. 
Moreover, the monotonicty of $\bar{\mu}_1(m,\k,D)$ in $D$ implies that when the diameter is small, the lower bound provided in Theorem \ref{thm eigenvalue QK} is better than any diameter-independent lower bound.

The lower bound in Theorem \ref{thm eigenvalue QK} will be derived as large time implication of the modulus of continuity estimates for solutions of the heat equation. This is consistent with the Riemannian case in \cite{AC13} and the K\"ahler case in \cite{LW21TAMS}.
Recall that the modulus of continuity $\w$ of a continuous function $u$ defined on a metric space $(X,d)$ is defined by 
\begin{equation*}
   \w(s):= \sup \left\{ \frac{u(y)-u(x)}{2} : d(x,y)=2s \right\}.
\end{equation*} 
It was first observed by Andrews and Clutterbuck \cite{AC09a, AC09} that the moduli of continuity for solutions to a class of quasilinear isotropic equations are subsolutions of the corresponding one-dimensional equations. 
This has been extended in a number of ways: to the setting of  Riemannian manifolds in \cite{AC13}\cite{Ni13}, to Bakry-Emery manifolds in \cite{AN12}\cite{LW21JGA}, to K\"ahler manifolds in \cite{LW21TAMS},  to viscosity solutions in \cite{Li16}\cite{LW17}\cite{LTW20}, to elliptic equations in \cite{AX19}, and to fully nonlinear equations in \cite{Li20}.
The most important applications of the modulus of continuity estimates are in obtaining lower bounds for the first nonzero eigenvalue (see \cite{AC13}\cite{Andrewssurvey15}\cite{LW19eigenvalue,LW19eigenvalue2,LW21TAMS}) and for proving sharp lower bound for the fundamental gap for convex domains in the Euclidean space (see \cite{AC11}), as well as for convex domains in the sphere (see \cite{SWW19}\cite{HWZ20}\cite{DSW18}).

It does not seem possible to express $\bar{\mu}_1(m,\k,D)$ explicitly in terms of elementary functions, so we provide some explicit lower bounds below, which give a sort of interpolation between the Zhong-Yang and Licherowicz lower bounds. 
It is similar to the one obtained by Shi and Zhang in \cite{SZ07} for the Riemannian case, which says for $\bar{\l}_1(n,\k,D)$ as in Theorem \ref{thm Riem}, it holds that for $\k\geq 0$, 
\begin{equation*}
    \bar{\l}_1(n,\k,D) \geq \sup_{s \in (0,1)}\left( 4s(1-s)\frac{\pi^2}{D^2}+ s(n-1)\k \right).
\end{equation*}
A slight modification of their proof shows that in the K\"ahler setting, it holds that
\begin{proposition}\label{prop 1.1}
Let $\bar{\mu}_1(m,\k_1,\k_2,D)$ be as in Theorem \ref{thm Kahler}. If $\k_1, \k_2 \geq 0$, then 
\begin{equation*}
   \bar{\mu}_1(m,\k_1,\k_2,D) \geq \sup_{s \in (0,1)}\left( 4s(1-s)\frac{\pi^2}{D^2}+ 2s(m-1)\k_2+4s\k_1 \right).
\end{equation*}
\end{proposition}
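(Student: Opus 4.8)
The plan is to adapt Shi and Zhang's comparison-of-ODEs argument from \cite{SZ07}. Write $\bar T(t) = 2(m-1)T_{\k_2}(t) + T_{4\k_1}(t)$ for the drift coefficient appearing in the one-dimensional problem of Theorem \ref{thm Kahler}, so that $\bar\mu_1 = \bar\mu_1(m,\k_1,\k_2,D)$ is the first nonzero Neumann eigenvalue of $\vp'' - \bar T \vp' = -\l\vp$ on $[-D/2,D/2]$. Fix $s\in(0,1)$ and set $\nu_s := 4s(1-s)\pi^2/D^2 + 2s(m-1)\k_2 + 4s\k_1$; the goal is to show $\bar\mu_1 \ge \nu_s$. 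Since $\bar\mu_1$ admits the variational (barrier) characterization as the largest $\l$ for which the ODE has a positive ``first eigenfunction'' profile — equivalently, by a standard Sturm-comparison / shooting argument, it suffices to produce a function $\psi$ on $[-D/2,D/2]$ that is odd (hence satisfies the Neumann-type condition at the endpoints once we check $\psi'(\pm D/2)=0$), positive-derivative on the interior, and satisfies the differential \emph{inequality} $\psi'' - \bar T\psi' \le -\nu_s \psi$. The natural candidate, following \cite{SZ07}, is $\psi(t) = \sin\!\big(\tfrac{2\pi s}{D} t\big)\cdot \chi(t)$ or more simply a product/composition built from the Zhong--Yang extremal $\sin$ and a correction absorbing the curvature terms; one checks that $\k_1,\k_2\ge 0$ forces $T_{\k_2}\ge \k_2 t$-type convexity bounds (indeed $tT_\k(t)\ge 0$ and $T_\k(t)\ge \k t + o(t)$ with the right sign), which is exactly what makes the drift term help rather than hurt.

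Concretely, the key steps in order: (1) record the elementary monotonicity/sign properties of $T_\k$ from \eqref{def T} when $\k\ge 0$, in particular that $T_\k(t) t \ge \k t^2$ and $T_\k$ is odd and nonnegative on $[0,D/2]$, so that $\bar T(t)\,t \ge \big(2(m-1)\k_2 + 4\k_1\big)t^2$; (2) for fixed $s$, take the test profile $\psi_s(t)=\sin(2\pi s t/D)$ (valid since $2\pi s/D \le \pi/D < \pi/(D/2) \cdot\frac12$... adjust so the argument stays in $[-\pi/2,\pi/2]$, i.e. use $|t|\le D/2$ and $s<1$ gives argument in $(-\pi s,\pi s)\subset(-\pi,\pi)$ — here one should instead use the known extremal of the \emph{shifted} problem as in \cite{SZ07}, built so $\psi_s'\ge0$), compute $\psi_s''=-(2\pi s/D)^2\psi_s$, and estimate $-\bar T\psi_s' \le -\big(2(m-1)\k_2+4\k_1\big) t\,\psi_s'$; (3) bound $t\psi_s'(t) \ge s\,\psi_s(t)$ on the interval — this is the crucial pointwise inequality $t\cos(2\pi s t/D) \cdot (2\pi s/D) \ge s \sin(2\pi s t/D)$, which follows from concavity of $\sin$ on $[0,\pi]$ after rescaling — so that $\psi_s'' - \bar T\psi_s' \le -\big[(2\pi s/D)^2\cdot(\text{factor}) + s(2(m-1)\k_2+4\k_1)\big]\psi_s$; (4) optimize the algebraic constants in $s$ to land exactly on $4s(1-s)\pi^2/D^2 + 2s(m-1)\k_2 + 4s\k_1$, then invoke the Sturm comparison theorem: a supersolution of the eigenvalue ODE with the correct boundary behavior forces $\bar\mu_1 \ge \nu_s$; (5) take the supremum over $s\in(0,1)$.

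The main obstacle is step (3)–(4): getting the trigonometric test function and the factor $4s(1-s)$ to come out precisely, rather than with a worse constant. In \cite{SZ07} the gain of the sharp $4s(1-s)$ (as opposed to a naive $s^2$) comes from a careful choice — not $\sin(2\pi s t/D)$ itself but a more clever odd profile, often obtained by rescaling the Zhong--Yang model solution and exploiting that on $[-D/2,D/2]$ the model solution $v$ of $v''=-\tfrac{\pi^2}{D^2}v + (\text{correction})$ satisfies a differential inequality with the linear drift. The delicate point is verifying the pointwise inequality relating $t v'(t)$ and $v(t)$ uniformly on the interval, together with the boundary conditions, so that the Neumann eigenvalue comparison is legitimate; since $\bar T$ here differs from the Riemannian $(n-1)T_\k$ only in the combination of coefficients, once the sign conditions $\k_1,\k_2\ge0$ are in hand the argument of \cite{SZ07} goes through essentially verbatim with $(n-1)\k$ replaced by $2(m-1)\k_2 + 4\k_1$, and I would simply remark that ``a slight modification of their proof'' suffices, spelling out only the curvature-term bookkeeping.
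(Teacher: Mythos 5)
Your route is genuinely different from the paper's, and it has a concrete gap. The paper (Proposition \ref{prop 5.1}, following Shi--Zhang) does \emph{not} build a test function and invoke Sturm comparison; instead it takes the actual first eigenfunction $\vp$ of the one-dimensional problem, passes to $y=\vp'$ (which satisfies a second-order ODE with Dirichlet boundary conditions $y(\pm D/2)=0$), multiplies by $y^{\gamma-1}$ with $\gamma>1$, integrates over $[-D/2,D/2]$, handles the drift terms by integration by parts (producing the $\sec^2$ factors, which are then bounded below by $1$ using $\k_1,\k_2\ge 0$), and applies Wirtinger's inequality to $\int (y^{\gamma/2})'{}^2$. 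Setting $s=1-1/\gamma$ yields exactly $4s(1-s)\pi^2/D^2 + s\bigl(2(m-1)\k_2+4\k_1\bigr)$. No supersolution is constructed and no pointwise trigonometric inequality is required.

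The critical step in your sketch fails. You assert the pointwise bound
\[
t\,\psi_s'(t)\ \ge\ s\,\psi_s(t)\qquad\text{for }\psi_s(t)=\sin\!\bigl(\tfrac{2\pi s}{D}t\bigr),\ t\in[0,D/2],
\]
equivalently, after substituting $u=2\pi s t/D\in[0,\pi s]$, the inequality $u\cos u \ge s\sin u$. At $u=\pi s$ with $s=1/2$ the left side is $0$ while the right side is $1/2>0$, and for $s>1/2$ the left side is negative while the right side is positive; so the inequality is false for all $s\ge 1/2$, precisely the range needed to beat the Lichnerowicz-type bound. Separately, $\psi_s'(D/2)=(2\pi s/D)\cos(\pi s)\ne 0$ unless $s=1/2$, so the test profile does not satisfy the Neumann endpoint condition, and the Barta-type barrier argument you allude to is only straightforward for Dirichlet problems — for the first \emph{nonzero} Neumann eigenvalue the eigenfunction changes sign and a positive barrier does not directly yield a lower bound. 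You acknowledge that the profile ``should instead'' be the Shi--Zhang extremal, but that extremal is not produced and its required properties are exactly what is missing. I would recommend abandoning the test-function route here and following the integral manipulation in Section 4 of the paper; the only modifications relative to the Riemannian case in \cite{SZ07} are the bookkeeping of the two drift terms $2(m-1)T_{\k_2}$ and $T_{4\k_1}$ under integration by parts, and the replacement of $(n-1)\k$ by $2(m-1)\k_2+4\k_1$ at the end.
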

We are grateful to Dr. Shoo Seto who pointed Proposition \ref{prop 1.1} to us. 
Similarly, in the quaternion-K\"ahler case, we have
\begin{proposition}\label{prop 1.2}
Let $\bar{\mu}_1(m,\k,D)$ be as in Theorem \ref{thm eigenvalue QK}. If $\k\geq 0$, then 
\begin{equation*}
   \bar{\mu}_1(m,\k,D) \geq \sup_{s \in (0,1)}\left( 4s(1-s)\frac{\pi^2}{D^2}+ 4s(m+2)\k \right).
\end{equation*}
\end{proposition}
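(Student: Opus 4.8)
The plan is to adapt the argument of Shi and Zhang \cite{SZ07}, in the form already used for Proposition \ref{prop 1.1}, to the one-dimensional model attached to Theorem \ref{thm eigenvalue QK}. Set $b(t):=4(m-1)T_\k(t)+3T_{4\k}(t)$, so that $\bar{\mu}_1(m,\k,D)$ is the first nonzero Neumann eigenvalue on $[-D/2,D/2]$ of $L\vp=\vp''-b\vp'=w^{-1}(w\vp')'$, where $w(t)=\exp(-\int_0^t b)$ is an even positive density (for $\k>0$, $w=\cos^{4(m-1)}(\sqrt\k\,t)\cos^{3}(2\sqrt\k\,t)$, and $w\equiv1$ when $\k=0$). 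Since $T_\k$ and $T_{4\k}$ are odd, the first eigenfunction $\vp$ may be taken odd and strictly increasing on $[-D/2,D/2]$, with $\vp'(\pm D/2)=0$; normalize $\vp(D/2)=1$. I will use repeatedly that for $\k\ge0$ and $t\in[0,D/2]$ the definition \eqref{def T} gives $T_\k'=\k+T_\k^2$ and $T_{4\k}'=4\k+T_{4\k}^2$, together with $T_\k(t),T_{4\k}(t)\ge0$; hence $b(t)\ge0$, $b(t)\ge 4(m+2)\k\,t$, and, dropping the nonnegative squares, $b'(t)\ge 4(m-1)\k+3\cdot4\k=4(m+2)\k$. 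The constant $K:=4(m+2)\k=4(m-1)\k+3(4\k)$, which bundles the curvature contributions of the $4(m-1)$ copies of $T_\k$ and the $3$ copies of $T_{4\k}$, is the ``effective Lichnerowicz constant'' of the model and will play the role of Shi and Zhang's $(n-1)\k$.

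First I would prove the Zhong--Yang type gradient estimate $(\vp')^2\le\bar{\mu}_1(1-\vp^2)$ on $[-D/2,D/2]$. Indeed $P:=(\vp')^2+\bar{\mu}_1\vp^2$ obeys $P'=2b\,(\vp')^2\ge0$ on $[0,D/2]$, so $P(t)\le P(D/2)=\bar{\mu}_1$ there, and the estimate extends to all of $[-D/2,D/2]$ by symmetry of $\vp$. Integrating $\vp'/\sqrt{1-\vp^2}\le\sqrt{\bar{\mu}_1}$ over $[-D/2,D/2]$ already yields $\bar{\mu}_1\ge\pi^2/D^2$; this disposes of the case $\k=0$, where in fact $\bar{\mu}_1(m,0,D)=\pi^2/D^2$, so the Proposition is an equality.

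The main step is the curvature-improved estimate. For each fixed $s\in(0,1)$, one enhances the $P$-function above to one that also carries the bound $b'\ge K$ --- so that the curvature enters only through the single number $K$ --- and thereby obtains a Shi--Zhang type differential inequality relating $\vp$ and $\vp'$; integrating it between the two extrema of $\vp$ and optimizing the resulting bound produces $\bar{\mu}_1\ge 4s(1-s)\,\pi^2/D^2+sK$. Taking the supremum over $s\in(0,1)$ and substituting $K=4(m+2)\k$ gives the Proposition.

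The hard part will be this last step: one must push Shi and Zhang's interpolation through the \emph{weighted} one-dimensional equation, which carries \emph{two} distinct curvature scales --- $\k$ through the $T_\k$ terms and $4\k$ through the $T_{4\k}$ terms --- and check that, after discarding the nonnegative contributions $T_\k^2$ and $T_{4\k}^2$, the two pieces assemble into exactly $K=4(m+2)\k$. This is the quaternion-K\"ahler counterpart of the fact that the sharp Lichnerowicz constant $8(m+1)\k$ of Alekseevsky--Marchiafava \cite{AM95} weakens to $4(m+2)\k$ once only the integrated curvature bound $b'\ge K$ (equivalently, a $CD(K,\infty)$-type condition) is retained. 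One should also note that $\bar{\mu}_1(m,\k,D)$ is defined only when $b$ is smooth on $[-D/2,D/2]$, i.e.\ when $D<\pi/(2\sqrt\k)$ for $\k>0$; this is guaranteed in the setting of Theorem \ref{thm eigenvalue QK} by the quaternion-K\"ahler Bonnet--Myers bound coming from the scalar curvature hypothesis.
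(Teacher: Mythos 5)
There is a genuine gap here. You correctly identify the model drift $b=4(m-1)T_\k+3T_{4\k}$, the effective constant $K=4(m+2)\k$, and the inequality $b'\ge K$ (via $T_\k'=\k+T_\k^2\ge\k$, $T_{4\k}'=4\k+T_{4\k}^2\ge 4\k$), and your $P$-function computation giving $(\vp')^2\le\bar\mu_1(1-\vp^2)$ and hence $\bar\mu_1\ge\pi^2/D^2$ is fine. But that only settles the case $\k=0$. The entire curvature-dependent content of the Proposition---the extra $+\,4s(m+2)\k$---is dispatched in a single sentence: ``one enhances the $P$-function \dots\ and thereby obtains a Shi--Zhang type differential inequality \dots\ integrating it \dots\ produces'' the claim, and you yourself flag this as ``the hard part.'' No enhanced $P$-function is written down, no differential inequality is derived, nothing is integrated. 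As written, nothing is proved for $\k>0$, which is the whole point of the statement.

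For comparison, the paper does not take a maximum-principle/gradient-estimate route at all. It proves a clean integral estimate (Proposition 5.1) covering both the K\"ahler and quaternion-K\"ahler models at once: set $y=\vp'$, which solves a second-order ODE with Dirichlet data; multiply by $y^{\gamma-1}$ for $\gamma>1$ and integrate over $[-D/2,D/2]$; integrate by parts on the drift terms, apply Wirtinger's inequality to $\int y^{\gamma-1}y''$, and discard the lower-order pieces using $\sec^2\ge 1$; finally optimize over $\gamma$ via $s=1-1/\gamma$. Proposition 1.2 is then the specialization $a=2$, $b=3$, $\k_1=\k_2=\k$, which yields exactly $s(4(m-1)\k+12\k)=4s(m+2)\k$. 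Your plan (Kr\"oger/Shi--Zhang gradient estimate) could plausibly be made to work, but it is a different and harder route, and you would need to actually construct the auxiliary barrier and push the two curvature scales $\k$ and $4\k$ through it; you have not done so. Separately, your closing remark that ``the quaternion-K\"ahler Bonnet--Myers bound coming from the scalar curvature hypothesis'' guarantees $D<\pi/(2\sqrt\k)$ is not justified as stated: the standard Bonnet--Myers bound from $\Ric=4(m+2)\k$ on a $4m$-manifold is $\pi\sqrt{(4m-1)/(4(m+2)\k)}$, which \emph{exceeds} $\pi/(2\sqrt\k)$ for $m\ge 2$; the sharper $\pi/(2\sqrt\k)$ bound requires a Tsukamoto-type argument using the quaternionic partial-trace curvature, not plain Ricci.
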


In Section 5, we also establish lower bound for the first Dirichlet eigenvalue in terms of geometric data (see Theorem \ref{Thm Dirichlet}).
This is done via a Laplace comparison theorem for the distance to the boundary function on quaternion-K\"ahler manifold. 
Other sections are organized as follows. In Section 2, we review some basic properties of quaternion-K\"ahler manifolds. In Section 3, we derive the modulus of continuity estimates for solutions of quasilinear parabolic equations on a quaternion-K\"ahler manifold. As an application, we prove Theorem \ref{thm eigenvalue QK}. The explicit lower bounds in Proposition \ref{prop 1.1} and \ref{prop 1.2} are proved in Section 4. 
%Lower bound for the first Dirichlet eigenvalue will be proved in Section 6. 

Throughout the paper, we use the following notations.  
The function $T_\k$ is defined for $\k \in \R$ by
\begin{equation}\label{def T}
     T_\k(t)=\begin{cases}
   \sqrt{\k} \tan{(\sqrt{\k}t)}, & \k>0, \\
   0, & \k=0, \\
   -\sqrt{-\k}\tanh{(\sqrt{-\k}t)}, & \k<0.
    \end{cases}
\end{equation}
The function $c_\k$ is defined for $\k \in \R$ by
\begin{equation}\label{def c kappa}
    c_\k(t) =\begin{cases} 
    \cos \sqrt{\k} t & \text{ if } \k >0, \\
    1 & \text{ if } \k =0,\\
    \cosh \sqrt{-\k }t & \text{ if } \k <0.
    \end{cases}
\end{equation}
\section{Quaternion-K\"ahler Manifolds}

In this section, we recall some basic properties about quaternion-K\"ahler manifolds that will be needed in the sequel.
These are proved by Berger \cite{Berger66} and Ishibara \cite{Ishihara74} (see also \cite{Besse87}). 
We shall follow the presentation of \cite{KLZ08} here.

\begin{definition}
A quaternion-K\"ahler manifold $(M^m,g)$ of quaternionic dimension $m$ (the real dimension is $4m$) is a Riemannian manifold with a rank three vector bundle $V \subset End(TM)$ satisfying
\begin{enumerate}
    \item In any coordinate neighborhood $U$ of $M$, there exists a local basis $\{I,J,K\}$ of $V$  such that 
    \begin{eqnarray*}
    && I^2=J^2=K^2=1,\\
    && IJ=-JI=K, \\
    && JK=-KJ=I, \\
    && KI=-IK=J,
    \end{eqnarray*}
    and  for all  $X,Y\in TM$
    \begin{equation*}
        \langle X,Y \rangle =\langle IX,IY \rangle =\langle JX,JY \rangle =\langle KX,KY \rangle
    \end{equation*}
    \item If $\phi \in \Gamma(V)$, then $\n_X \phi \in \Gamma(V)$ for all $X\in TM$. 
\end{enumerate}
\end{definition}
It is worth noting that then the tensors $I, J, K$ may not be globally defined on $M$. For instance, the canonical quaternionic projective space admits no almost complex structure for topological reasons. However, the space spanned by $I, J, K$ may always be defined globally according to the definition. 

A well known fact is that a $4m$-dimensional Riemannian manifold is quaternion-K\"ahler if and only if its restricted holonomy group is contained in $Sp(n)Sp(1)$.
Since the $4$-dimensional Riemannian manifolds with holonomy $Sp(1)Sp(1)$ are simply the oriented Riemanian manifolds, we shall only consider the case $m \geq  2$.

In this paper, we are mostly concerned about the curvature properties of quaternion-K\"ahler manifolds. 
The Riemannian curvature tensor of $(M,g)$ is defined by 
$$R(X,Y,Z,W)=\langle \n_Y\n_X Z -\n_X \n_Y Z+\n_{[X,Y]}Z, W\rangle.$$

\begin{definition}
Let $(M,g)$ be a quaternion-K\"ahler manifold. 
\begin{enumerate}
    \item The quaternionic sectional curvature of $M$ is defined as 
    \begin{equation*}
    Q(X):=\frac{R(X,IX,X,IX) +R(X,JX,X,JX)+R(X,KX,X,KX)}{|X|^2}.
\end{equation*}
\item The orthogonal Ricci curvature of $M$ is defined as 
\begin{equation*}
    \Ric^\perp(X,X):=\Ric(X,X)-Q(X)
\end{equation*}
\end{enumerate}
\end{definition}

First of all, all quaternion-K\"ahler manifolds of quaternionic dimension $m\geq 2$ are Einstein (see for instance \cite[Theorem 1.2]{KLZ08}), namely there exists a constant $\kappa$ such that  $$\Ric=4(m+2)\k.$$ Moreover, we have the following proposition.  
\begin{proposition}\label{prop QK}
Let $(M^m,g)$ be a quaternion-K\"ahler manifold of quaternionic dimension $m\geq 2$ with $\Ric =4(m+2)\k$ for $\k \in \R$. Then 
\begin{enumerate}
    \item $M$ has constant quaternionic sectional curvature, i.e., 
    \begin{equation*}
        Q(X)=12 \kappa |X|^2. 
    \end{equation*}
    \item $M$ has constant orthogonal Ricci curvature, i.e., 
    \begin{equation*}
        \Ric^\perp (X,X)=4(m-1)\kappa |X|^2.
    \end{equation*}
    %\begin{equation*}
    %    R(X,IX,X,IX)+R(X,JX,X,JX)+R(X,KX,X,KX)=12\k |X|^4.
    %\end{equation*} 
\end{enumerate}
\end{proposition}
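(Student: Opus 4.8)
The second assertion is immediate from the first together with the Einstein property recalled above: since $\Ric = 4(m+2)\kappa$ and $\Ric^\perp$ is by definition $\Ric - Q$, part (1) gives
\[
\Ric^\perp(X,X) = \Ric(X,X) - Q(X) = 4(m+2)\kappa|X|^2 - 12\kappa|X|^2 = 4(m-1)\kappa|X|^2 .
\]
So the whole content lies in part (1), that $Q$ equals the constant $12\kappa$ on the unit tangent bundle. This is the classical theorem of Berger and Ishihara, and the plan is to present it along the lines of \cite{Besse87} and \cite{KLZ08}.

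One standard route is to work locally with a frame $\{I,J,K\}$ of $V$. Since $\nabla$ preserves the fibre metric, there are local $1$-forms $\alpha,\beta,\gamma$ with $\nabla I = \gamma\otimes J - \beta\otimes K$, $\nabla J = \alpha\otimes K - \gamma\otimes I$, and $\nabla K = \beta\otimes I - \alpha\otimes J$. Computing the curvature of the induced connection on $\mathrm{End}(TM)$ in two ways — from these structure equations, and as the commutator of the base curvature operator $R(X,Y)$ with $I,J,K$ — produces a matrix of $2$-forms built out of $d\alpha + \beta\wedge\gamma$ and its cyclic permutations. The crucial step, and the one place where the assumption $m\ge 2$ enters (so that the holonomy sits in $Sp(m)Sp(1)$ with an irreducible $Sp(m)$ factor), is Berger's theorem that these curvature $2$-forms are a \emph{universal} multiple of the fundamental $2$-forms $\omega_I(X,Y) = \langle IX,Y\rangle$ (and similarly $\omega_J,\omega_K$); equivalently, up to the choice of sign conventions,
\[
[R(X,Y),I] = \mu\big(\omega_J(X,Y)K - \omega_K(X,Y)J\big)
\]
together with its cyclic permutations, where $\mu$ is a constant depending only on the scalar curvature. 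I would either quote this or sketch its proof from the $Sp(m)Sp(1)$-decomposition of the space of algebraic curvature tensors together with the first Bianchi identity.

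Granting the displayed identity, the computation of $Q$ is elementary. Putting $Y = IX$ and using $\omega_J(X,IX) = \omega_K(X,IX) = 0$ and $\omega_I(X,IX) = |X|^2$, the cyclic versions of the identity give the operator relations $[R(X,IX),J] = -\mu|X|^2 K$ and $[R(X,IX),K] = \mu|X|^2 J$. Applying these to $X$, pairing against the appropriate member of $\{IX,JX,KX\}$, and using the skew-symmetry of $R(X,IX)$, one obtains
\[
R(X,IX,X,IX) = -R(X,IX,JX,KX) - \mu|X|^4
\]
and its two cyclic analogues. Summing the three relations and invoking the first Bianchi identity, which forces $R(X,IX,JX,KX) + R(X,JX,KX,IX) + R(X,KX,IX,JX) = 0$, collapses the cross terms and yields $Q(X) = -3\mu|X|^2$. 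Evaluating on the model space $\H P^m$ with $\scal = 16m(m+2)\kappa$, where every curvature quantity is explicit, then pins down $\mu = -4\kappa$, hence $Q(X) = 12\kappa|X|^2$.

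I expect the main obstacle to be the universality of $\mu$: this is exactly Berger's structural theorem, it is genuinely representation-theoretic, and it is false in quaternionic dimension $1$ (where $Sp(1)Sp(1) = SO(4)$ imposes no constraint). Everything downstream of it — the Einstein property, which the excerpt already takes as known, the two operator identities, and the Bianchi cancellation — is routine. Accordingly, in the write-up I would state part (1) with references to \cite{Berger66, Ishihara74, Besse87, KLZ08}, include the short derivation of $Q(X) = -3\mu|X|^2$ from the displayed curvature identity for completeness, and note that part (2) then follows at once.
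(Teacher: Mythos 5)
Your proposal is correct, and your treatment of part (2) coincides with the paper's: the paper deduces it in one line from $\Ric^\perp = \Ric - Q$ and the Einstein condition, exactly as you do. For part (1), however, the paper's ``proof'' is simply the citation ``see Theorem~1.3 in \cite{KLZ08},'' with no argument given. What you have written is a correct and reasonably detailed sketch of the classical Berger--Ishihara argument underlying that cited result: introduce the local connection $1$-forms $\alpha,\beta,\gamma$ on the bundle $V$, invoke Berger's structural theorem that for $m\ge 2$ the curvature $2$-forms of the induced connection are a \emph{universal} scalar-curvature multiple $\mu$ of the fundamental $2$-forms $\omega_I,\omega_J,\omega_K$, specialize the resulting commutator identities $[R(X,Y),I]=\mu\big(\omega_J(X,Y)K-\omega_K(X,Y)J\big)$ (and cyclic) to $Y\in\{IX,JX,KX\}$, and use the first Bianchi identity to cancel the cross terms $R(X,IX,JX,KX)+R(X,JX,KX,IX)+R(X,KX,IX,JX)=0$, arriving at $Q(X)=-3\mu|X|^2$ and then fixing $\mu=-4\kappa$ by the model space. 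I have checked the two commutator specializations, the pairing computation giving $R(X,IX,X,IX)=-R(X,IX,JX,KX)-\mu|X|^4$, and the Bianchi cancellation; they are all correct (under the standard convention $I^2=J^2=K^2=-\mathrm{Id}$, which the paper presumably intends despite the ``$=1$'' typo in its Definition~2.1). You correctly identify the one genuinely nontrivial input, the universality of $\mu$, as the representation-theoretic heart of the matter where $m\ge 2$ is used. In short: same route as the paper, but you supply the proof the paper only cites; the only caveat is that a fully self-contained write-up would still need to prove or carefully reference that universality step rather than merely flag it.
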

\begin{proof}
For (1), see Theorem 1.3 in \cite{KLZ08}. (2) follows directly from the definition  $\Ric^\perp(X,X) =\Ric(X,X) -Q(X)$.
\end{proof}

We end this section with the following useful lemma. 
\begin{lemma}
Let $(M^m,g)$ be a quaternion-K\"ahler manifold of quaternionic dimension $m\geq 2$ with $\Ric =4(m+2)\k$ for $\k \in \R$. Let $\g : [a,b] \to M$ be a geodesic with unit speed and $X_I(t)$,$X_J(t)$,$X_K(t)$ are parallel vector fields along $\gamma$ such that $X_I(a) = I\gamma'(a)$, $X_J(a) = J\gamma'(a)$, $X_K(a) = K\gamma'(a)$. Then
    \begin{equation*}
        R(\gamma',X_I(t),\gamma',X_I(t))+R(\gamma',X_J(t),\gamma',X_J(t))+ R(\gamma',X_K(t),\gamma',X_K(t)) =12\k
    \end{equation*}
    for all $t \in [a,b]$.
\end{lemma}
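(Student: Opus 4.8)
The plan is to reduce the statement to the pointwise identity in Proposition~\ref{prop QK}(1) combined with the parallelism of the frame. First I would observe that at the initial point $t=a$, the quantity in question is exactly $Q(\gamma'(a))$ by definition of the quaternionic sectional curvature, since $X_I(a)=I\gamma'(a)$, $X_J(a)=J\gamma'(a)$, $X_K(a)=K\gamma'(a)$, and $|\gamma'(a)|=1$; hence it equals $12\k$ by Proposition~\ref{prop QK}(1). The content of the lemma is therefore that this sum is \emph{constant} along $\gamma$, i.e.\ independent of $t$.

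The natural approach is to differentiate the function
\begin{equation*}
    f(t):=R(\gamma',X_I(t),\gamma',X_I(t))+R(\gamma',X_J(t),\gamma',X_J(t))+R(\gamma',X_K(t),\gamma',X_K(t))
\end{equation*}
in $t$ and show $f'\equiv 0$. Since $\gamma'$ and each $X_\bullet$ are parallel along $\gamma$, we have $\frac{D}{dt}\gamma'=0$ and $\frac{D}{dt}X_\bullet=0$, so differentiating $f$ only produces the covariant derivative of the curvature tensor: $f'(t)=\sum_\bullet (\nabla_{\gamma'}R)(\gamma',X_\bullet,\gamma',X_\bullet)$. Now I would invoke the fact that $M$ is a quaternion-K\"ahler manifold of quaternionic dimension $m\ge 2$, hence locally symmetric in the sense relevant here — more precisely, quaternion-K\"ahler manifolds are Einstein and their curvature operator has a very rigid structure; in particular the quaternionic sectional curvature is \emph{constant} (equal to $12\k$ everywhere by Proposition~\ref{prop QK}(1)), which is exactly the statement that $Q(Y)=12\k|Y|^2$ for every tangent vector $Y$ at every point.

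Rather than differentiating, then, the cleaner route is the following. Extend $\gamma'$ along $\gamma$ (it is already parallel) and note that at each point $\gamma(t)$, the triple $\{X_I(t),X_J(t),X_K(t)\}$ need not equal $\{I\gamma'(t),J\gamma'(t),K\gamma'(t)\}$, because the bundle $V$ is only parallel as a bundle, not pointwise trivialized by a parallel frame of $I,J,K$. However, parallel transport preserves $V$: since $\nabla_X\phi\in\Gamma(V)$ for $\phi\in\Gamma(V)$, parallel transport along $\gamma$ maps the fiber $V_{\gamma(a)}$ into $V_{\gamma(t)}$, and it also preserves the quaternionic relations and the metric on $V$. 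Hence the parallel transports of $I\gamma'(a),J\gamma'(a),K\gamma'(a)$ form an orthonormal triple spanning the image of $V_{\gamma(a)}$ acting on $\gamma'(t)$, and since the quaternionic structure is preserved, this image is precisely $\operatorname{span}\{I'\gamma'(t),J'\gamma'(t),K'\gamma'(t)\}$ for \emph{some} admissible local basis $\{I',J',K'\}$ of $V$ near $\gamma(t)$. Therefore $\sum_\bullet R(\gamma',X_\bullet,\gamma',X_\bullet)=Q(\gamma'(t))$ computed with respect to that local basis, and by Proposition~\ref{prop QK}(1) this is $12\k$ independent of the choice of admissible basis and of $t$. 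The main obstacle is making this last step rigorous: one must check that the quaternionic sectional curvature $Q(X)$ is genuinely independent of the chosen local basis $\{I,J,K\}$ of $V$ (so that $\sum_\bullet R(\gamma',X_\bullet,\gamma',X_\bullet)$ really computes $Q$), which follows because any two admissible bases differ by an $\SO(3)$ rotation and $\sum_\bullet R(X,\phi_\bullet X,X,\phi_\bullet X)$ is invariant under orthogonal changes of the orthonormal triple $\{\phi_I,\phi_J,\phi_K\}$ — a short linear-algebra computation. Once that invariance is in hand, the lemma is immediate from Proposition~\ref{prop QK}(1) applied at each $\gamma(t)$ with $X=\gamma'(t)$.
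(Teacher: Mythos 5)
Your proof is correct, and it supplies precisely the reasoning that the paper's one-line citation (``See Lemma 1.5 in [KLZ08]'') is standing in for. The essential points are all present: $\nabla_X\phi\in\Gamma(V)$ for $\phi\in\Gamma(V)$ makes $V$ a parallel subbundle of $\operatorname{End}(TM)$, so parallel transport $P$ along $\gamma$ carries $V_{\gamma(a)}$ to $V_{\gamma(t)}$; since conjugation by $P$ is a metric-preserving algebra isomorphism, $\{P\circ I\circ P^{-1},\,P\circ J\circ P^{-1},\,P\circ K\circ P^{-1}\}$ is again an admissible local basis of $V$ near $\gamma(t)$, and $X_\bullet(t)=(P\circ\bullet\circ P^{-1})(\gamma'(t))$ because $\gamma'$ is parallel; finally the $\Oo(3)$-invariance of $\sum_\bullet R(X,\phi_\bullet X,X,\phi_\bullet X)$ (a one-line orthogonality computation) shows $Q$ is well-defined independently of the admissible basis, so the sum is $Q(\gamma'(t))=12\kappa$ by Proposition~\ref{prop QK}(1). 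One small caution on the route you considered and discarded: it was right to abandon the differentiation argument via $\nabla R$, since quaternion-K\"ahler manifolds are \emph{not} locally symmetric in general (e.g.\ the negative scalar curvature examples), so $\nabla_{\gamma'}R$ has no reason to vanish; the parallel-transport-of-$V$ argument is the correct mechanism and does not require $\nabla R=0$.
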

\begin{proof}
See Lemma 1.5 in \cite{KLZ08}.
\end{proof}

\section{Modulus of Continuity Estimates}

In this section, we prove the modulus of continuity estimates for solution of a class of quasilinear isotropic parabolic equations on a compact quaternion-K\"ahler manifold.

As in the Riemannian case in \cite{AC13} or the K\"ahler case in \cite{LW21TAMS}, we consider the following isotropic quasilinear equations: 
\begin{equation}\label{eq isotropic}
\frac{\p u}{\p t} =Q[u]:=\left[\a(|
\n u|) \frac{\n_iu \n_j u}{|\n u|^2} +\b(|\n u|)\left(\delta_{ij}-\frac{\n_i u \n_j u}{|\n u|^2}\right) \right].   
\end{equation}
Here $\a$ and $\b$ are smooth positive functions. Some important examples of \eqref{eq isotropic} are the heat equation (with $\a=\b=1$), the $p$-Laplacian heat flows (with $\a=(p-1)|\n u|^{p-2}$ and $\b=|\n u|^{p-2}$) and the graphical mean curvature flow (with $\a=1/(1+|\n u|^2)$ and $\b=1$). 

In the quaternion-K\"ahler case, the associated one-dimensional operator  $\mathcal{F}$ is given by 
\begin{equation}\label{def F}
    \mathcal{F} \vp := \a(\vp')\vp'' +\left(4(m-1)T_{\k}+3T_{4\k} \right)\b(\vp')\vp',
\end{equation}
where the function $T_{\k}$ is the function defined in \eqref{def T}. 

The main result of this section is the following modulus of continuity estimates  on a compact quaternion-K\"ahler manifold, in terms of initial oscillation, elapsed time, and scalar curvature lower bound. 
\begin{thm}\label{thm MC QK}
Let $(M^m,g,I,J,K)$ be a compact quaternion-K\"ahler manifold with diameter $D$ whose scalar curvature is bounded from below by $16m(m+2)\k$ for $\k \in \R$.  
Let $u : M \times [0,T) \to \R$ be a solution of \eqref{eq isotropic} (with Neumann boundary condition if $M$ has a strictly convex boundary). Then the modulus of continuity $\omega:[0,D/2] \times [0,T) \to \R$ of $u$ is viscosity subsolution of the one-dimensional equation
\begin{equation}\label{ODE}
    \w_t =\mathcal{F}\w,
\end{equation}
where the operator $\mathcal{F}$ is defined in \eqref{def F}. 
\end{thm}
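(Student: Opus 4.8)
The plan is to follow the maximum principle argument of Andrews--Clutterbuck \cite{AC13}, adapted to the quaternion-K\"ahler geometry exactly as the K\"ahler case was handled in \cite{LW21TAMS}. Let $\w$ be a smooth strict supersolution of \eqref{ODE}, meaning $\w_t > \mathcal{F}\w$ on $(0,D/2]\times(0,T)$, with $\w(s,0)$ strictly larger than the initial modulus of continuity of $u$, $\w'>0$, and $\w(0,t)\geq 0$; it suffices to show that $u(y,t)-u(x,t) < 2\w(d(x,y)/2,t)$ for all $x\neq y$ and all $t\in(0,T)$, and then let $\w$ decrease to the sharp one-dimensional solution. Suppose not; then there is a first time $t_0>0$ and points $x_0\neq y_0$ at which equality $u(y_0,t_0)-u(x_0,t_0) = 2\w(d(x_0,y_0)/2,t_0)$ holds. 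The Neumann condition together with strict convexity of $\p M$ forces $x_0,y_0$ to be interior points (this is the standard argument that the distance between boundary points with the gradient pointing inward cannot be a maximizer), so we may work with a minimizing geodesic $\g:[-L,L]\to M$ of length $2L = d(x_0,y_0)$ joining $x_0=\g(-L)$ to $y_0=\g(L)$.

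First I would record the first-order conditions: spatial differentiation at the maximum gives $\n u(y_0,t_0) = \w'\,\g'(L)$ and $\n u(x_0,t_0) = \w'\,\g'(-L)$, so in particular $|\n u|$ equals $\w'$ at both points and the gradient is tangent to the geodesic at each endpoint. The time derivative inequality gives $\p_t u(y_0,t_0) - \p_t u(x_0,t_0) \geq 2\w_t$. Plugging the equation \eqref{eq isotropic} into the left side, the first-order data reduce $Q[u]$ at $y_0$ and $x_0$ to $\a(\w')\,(\text{Hess}\,u)(\g',\g') + \b(\w')\,\big(\Delta u - (\text{Hess}\,u)(\g',\g')\big)$ evaluated at the respective endpoints. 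The crux is then a second-order (in space) comparison: using that $\w\big(d(x,y)/2,t\big)$ can be compared with $u(y,t)-u(x,t)$ along suitable variations, one obtains an upper bound for the combination $\text{Hess}\,u(\g',\g')|_{y_0} - \text{Hess}\,u(\g',\g')|_{x_0}$ in terms of $\w''$, and an upper bound for $\Delta u|_{y_0} - \Delta u|_{x_0}$ in terms of $\w''$ plus a curvature term coming from the second variation of arc length along $\g$.

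The key geometric input, and the step I expect to be the main obstacle, is the choice of the parallel frame along $\g$ used to compute the Laplacian contribution. In the Riemannian case one uses an arbitrary orthonormal parallel frame and the second variation picks up $-\sum_i R(\g',e_i,\g',e_i) = -\Ric(\g',\g')$; in the K\"ahler case one splits off the $J\g'$ direction to exploit the holomorphic sectional curvature. Here, by Proposition \ref{prop QK} and the Lemma at the end of Section 2, I would split the normal bundle along $\g$ as the span of the three parallel fields $X_I, X_J, X_K$ with $X_I(-L)=I\g'(-L)$ etc., together with a parallel orthonormal frame of the orthogonal complement; the three "quaternionic" directions contribute exactly $-12\k$ (by the Lemma, the relevant curvature sum is constant along $\g$), while the remaining $4m-4$ directions contribute at least $-\Ric^\perp(\g',\g') = -4(m-1)\k$ using the scalar curvature lower bound, which via the Einstein property is equivalent to $\Ric \geq 4(m+2)\k$, hence $\Ric^\perp \geq 4(m-1)\k$. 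Feeding these into the Jacobi-field/second-variation estimate produces the coefficients $4(m-1)T_\k$ and $3T_{4\k}$: the factor $3T_{4\k}$ arises because each of the three directions $X_I,X_J,X_K$ is handled like a single K\"ahler holomorphic-sectional direction with curvature lower bound $4\k$ (contributing $T_{4\k}$ apiece), and $4(m-1)T_\k$ from the ordinary Ricci-type directions. Combining all inequalities yields
\begin{equation*}
2\w_t \leq 2\Big( \a(\w')\w'' + \big(4(m-1)T_\k + 3T_{4\k}\big)\b(\w')\w' \Big) = 2\mathcal{F}\w,
\end{equation*}
contradicting $\w_t > \mathcal{F}\w$. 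Standard arguments then upgrade this to the viscosity-subsolution statement for the actual modulus of continuity: one replaces the smooth test function from above and uses that $\w$ need only be touched from above at a point, the maximum-principle computation being purely local; the convex-boundary case requires only the extra remark that boundary maximizers are excluded, exactly as in \cite{AC13} and \cite{LW21TAMS}. A minor technical point to address carefully is the possibility that $\g$ is not unique or that $d$ fails to be smooth at $(x_0,y_0)$, which is handled in the usual way by noting that a minimizing geodesic still exists and that the relevant one-sided derivative inequalities hold.
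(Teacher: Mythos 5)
Your proposal is correct and follows essentially the same route as the paper: parallel Fermi frame along the minimizing geodesic with $e_2,e_3,e_4$ initialized at $I\gamma',J\gamma',K\gamma'$, the Lemma from Section 2 guaranteeing $\sum_{i=2}^4 R(\gamma',e_i,\gamma',e_i)=12\kappa$ along the whole geodesic, the Einstein property giving $\sum_{j\ge 5}R(\gamma',e_j,\gamma',e_j)\ge 4(m-1)\kappa$, and the second-variation test functions $c_{4\kappa}/c_{4\kappa}(s_0)$ and $c_\kappa/c_\kappa(s_0)$. The only cosmetic difference is framing — you argue by first-contact-time contradiction against a strict supersolution whereas the paper verifies the viscosity-subsolution inequality directly against a smooth test function touching $\omega$ from above — and these are equivalent here.
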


\begin{proof}
The proof proceeds as in the K\"ahler case in \cite{LW21TAMS} but is slightly more involved. 
By the definition of viscosity solutions (see \cite{CIL92}), we need to show that for every smooth function $\vp$ that touches $\w$ from above at $(s_0,t_0) \in (0,D/2)\times (0,T)$ in the sense that 
\begin{equation*}
    \begin{cases}
    \vp (s,t)  \geq  \w(s,t)  \text{  near } (s_0,t_0),\\ 
    \vp(s_0,t_0) =  \w(s_0,t_0), 
    \end{cases}
\end{equation*}
it holds that  
\begin{equation}\label{eq2.0}
\vp_t \leq \L  \vp, 
\end{equation}
at the point $(s_0,t_0)$. 
It follows from the definition of $\w$ that for such a function $\vp$, we have
\begin{equation} \label{eq2.1}
    u\left(\gamma(b),t \right)- u\left( \gamma(a),t \right) -2 \vp \left( \frac{L[\gamma]}{2}, t \right) \leq 0
\end{equation}
for any $t \leq t_0$ close to $t_0$ and any smooth path $\gamma: [a,b] \to M$ with length close to $2s_0$. Moreover, since $M$ is compact, there exist points $x_0$ and $y_0$ in $M$ (assume for a moment that $\p M = \emptyset$), with $d(x_0,y_0)=2s_0$ such that the equality in \eqref{eq2.1} holds for $\gamma_0:[-s_0,s_0] \to M$, a length-minimizing unit speed geodesic connecting $x_0$ and $y_0$.
The key idea is to derive useful inequalities from the first and second tests along smooth family of variations of the curve $\gamma_0$. 
For this purpose, we need to recall the first and second variation formulas of arc length.  If $\gamma:(r,s) \to M$ is a smooth variation of $\gamma_0(s)$, then we have 
\begin{equation*}\label{eq 1st variation}
   \left. \frac{d}{dr}\right|_{r=0}L[\gamma(r,s)] = \left.g(T, \gamma_r) \right|_{-s_0}^{s_0}, 
\end{equation*}
and 
\begin{equation*}\label{eq 2nd variation}
    \left. \frac{d^2 }{dr^2}\right|_{r=0}L[\gamma(r,s)]  = \int_{-s_0}^{s_0} \left(|(\n_s \gamma_r)^\perp|^2 -R(\gamma_s,\gamma_r,\gamma_s,\gamma_r)\right)ds +\left. g(T,\n_r \gamma_r)\right|_{-s_0}^{s_0},
\end{equation*}
where $T$ is the unit tangent vector to $\gamma_0$. 

It will be convenient to work in the (quaternionic) Fermi coordinates along $\gamma_0$ chosen as follows. 
%We pick an orthonormal basis $\{e_i\}_{i=1}^{2m}$ for $T_{x_0}M$ with $e_1=\gamma_0'(-s_0)$ and $e_2 =J \gamma_0'(-s_0)$, where $J$ is the complex structure. 
%Then parallel transport along $\gamma_0$ produces an orthonormal basis $\{e_i(s)\}_{i=1}^{2m}$ for $T_{\gamma_0(s)}M$ with $e_1(s)=\gamma_0'(s)$ for each $s\in [-s_0,s_0]$. Since $J$ is parallel and $\gamma_0$ is a geodesic, the vector field $J\gamma_0'(s)$ is parallel and thus we have $e_2(s) =J e_1(s)$ for each $s\in [-s_0, s_0]$.
%As before, if $\vp$ is a smooth function touching $\w$ from above at $(s_0,t_0) \in (0,D/2)\times (0,T)$, then 
%\begin{equation} \label{eq6.1}
%    u\left(\gamma(1),t \right)- u\left( \gamma(0),t \right) -2 \vp \left( \frac{L[\gamma]}{2}, t) \right) \leq 0
%\end{equation}
%for any $t \leq t_0$ close to $t_0$ and any smooth path $\gamma: [a,b] \to M$ with length close to $2s_0$. Moreover, there exist points $x_0$ and $y_0$ in the interior of $M$ (same argument as in the K\"ahler case rules out the possibility that $x_0$ or $y_0$ may lie in $\p M$) with $d(x_0,y_0)=2s_0 >0$ such that the equality in \eqref{eq6.1} holds for $\gamma_0:[-s_0,s_0] \to M$, a length-minimizing unit speed geodesic connecting $x_0$ and $y_0$. 
Choose an orthonormal basis $\{e_i\}_{i=1}^{4m}$ for $T_{x_0}M$ with
\begin{equation*}
    e_1=\gamma_0'(-s_0), e_2=I\gamma_0'(-s_0), e_3=J\gamma_0'(-s_0), e_4=K\gamma_0'(-s_0),
\end{equation*}
and parallel transport it along $\gamma_0$ to produce an orthonormal basis $\{e_i(s)\}_{i=1}^{4m}$ for $T_{\gamma_0(s)}M$ with $e_1(s)=\gamma_0'(s)$ for each $s\in [-s_0,s_0]$. Notice that the vector fields $I\gamma_0'(t)$, $J\gamma_0'(t)$ and $K\gamma_0'(t)$ may not be parallel along $\gamma_0$. 
%Let $e_2(s),e_3(s),e_4(s)$ be the vector fields obtained by parallel transport along $\gamma_0$ of $e_2,e_3,e_4$, respectively. 

The first variation consideration gives 
\begin{equation}\label{eq6.2}
    Q[u](y_0,t_0) -Q[u](x_0,t_0) -2\vp_t \geq 0. 
\end{equation}
and 
\begin{equation}\label{eq6.3}
    \n u(x_0,t_0) =-\vp'e_1(-s_0), \text{ and }
    \n u(y_0,t_0) =\vp'e_1(s_0).
\end{equation}
The variation  $\gamma(r,s)=\gamma_0\left(s+r\frac{2s-1}{L} \right)$ yields 
\begin{equation}\label{eq6.6}
    u_{11}(y_0,t_0)-u_{11}(x_0,t_0) - 2\vp'' \leq 0.
\end{equation}
For $i=2,3,4$, the variations $\gamma(r,s)=\exp_{\gamma_0(s)}\left(r\eta(s)e_i(s) \right)$ produce 
\begin{equation}\label{eq6.7}
    u_{ii}(y_0,t_0) -u_{ii}(x_0,t_0) - \vp' \int_{-s_0}^{s_0} \left((\eta')^2 - \eta^2 R(e_1, e_i,e_1,e_i) \right)ds \leq 0,
\end{equation}
and for $5\leq j \leq 4m$, the variations $\gamma(r,s)=\exp_{\gamma_0(s)}\left(r\zeta(s)e_j(s) \right)$ yield
\begin{equation}\label{eq6.8}
     u_{jj}(y_0,t_0)-u_{jj}(x_0,t_0) - \vp'  \int_{-s_0}^{s_0} \left((\zeta')^2 - \zeta^2 R(e_1, e_j,e_1,e_j) \right)ds \leq 0.
\end{equation}
We derive, by choosing $\eta(s)=\frac{c_{4\k}(s)}{c_{4\k}(s_0)}$ and summing \eqref{eq6.7} for $i=2,3,4$,  that  
\begin{eqnarray}\label{eq6.9}\nonumber
&& \sum_{i=2}^4 \left(u_{ii}(y)-u_{ii}(x) \right) \nonumber \\ \nonumber
&\leq&  \vp'  \int_{-s_0}^{s_0} \left(3(\eta')^2-\eta^2 \left( \sum_{i=2}^4 R(e_1,e_i,e_1,e_i) \right) \right) ds \\ \nonumber
&\leq& 3\vp' \left. \eta \eta' \right|_{-s_0}^{s_0} -\int_{-s_0}^{s_0} \eta^2 \left(\sum_{i=2}^4 R(e_1,e_i,e_1,e_i)-12\k \right)ds \\
&=& -6 T_{4\k} \vp' 
\end{eqnarray}
where we have used 
$\sum_{i=2}^4 R(e_1,e_i,e_1,e_i) \geq 12\k$ by in Proposition \ref{prop QK}. 

Note that 
$$\sum_{i=5}^{4m} R(e_1,e_i,e_1,e_i) =\Ric(e_1,e_1)- \sum_{i=2}^{4} R(e_1,e_i,e_1,e_i) \geq 4(m-1)\k.$$
Choosing $\zeta(s)=\frac{c_{\k}(s)}{c_{\k}(s_0)}$ and summing \eqref{eq6.8} for $5\leq j\leq 4m$ yield
\begin{eqnarray}\label{eq6.10}\nonumber
&& \sum_{j=5}^{4m} \left(u_{jj}(y)-u_{jj}(x) \right)\\ \nonumber
&\leq&  \vp' \int_{-s_0}^{s_0} \left(4(m-1)(\eta')^2-\zeta^2 \left( \sum_{j=5}^{4m} R(e_1,e_j,e_1,e_j) \right) \right) ds \\ \nonumber
&\leq& 4(m-1) \vp' \left. \zeta \zeta' \right|_{-s_0}^{s_0} -\int_{-s_0}^{s_0} \zeta^2 \left(\sum_{j=5}^{4m} R(e_1,e_j,e_1,e_j)-4(m-1)\k \right)ds \\
&=& -8 T_{4\k} \vp'. \nonumber
\end{eqnarray}
Combining \eqref{eq6.9} and \eqref{eq6.10} together, we have 
\begin{eqnarray} \label{eq6.11}\nonumber
&& Q[u](y_0,t_0)-Q[u](x_0,t_0) \\ \nonumber
&=&   \a(\vp')\left(u_{11}(y_0,t_0)-u_{11}(x_0,t_0) \right) +\b(\vp')\sum_{i=2}^{4m}\left(u_{ii}(y_0,t_0) -u_{ii}(x_0,t_0) \right)\\ \nonumber
&\leq& 2\a(\vp')\vp'' -2\b(\vp') \vp' \left( 4(m-1)T_{\k}+3T_{4\k} \right) = 2\mathcal{F}\vp. \\
&&
\end{eqnarray}
It follows from \eqref{eq6.2} and \eqref{eq6.11} that
\begin{equation*}
    \vp_t \leq \mathcal{F}\vp. 
    %\a(\vp')\vp'' +\b(\vp') \vp' \left( 4(m-1)C_{\k}+3C_{4\k} \right).
\end{equation*}
This completes the proof. 

In case $M$ has a strictly convex boundary, the same argument as in \cite{LW21TAMS} rule out the possibility that either $x_0\in \p M$ or $y_0 \in \p M$. So the above argument remains valid. 
\end{proof}

The following corollary is immediate. 
\begin{corollary}\label{corollary QK}
Let $M$ and $u$ be the same as in Theorem \ref{thm MC QK}. Suppose $\vp: [0,D/2]\times [0,T) \to \R$ satisfies 
\begin{enumerate}
    \item $\vp_t \geq \mathcal{F}\vp$;
    %\a(\vp')\vp'' +\left(4(m-1)T_{\k}+3T_{4\k} \right)\b(\vp')\vp'$; 
    \item $\vp' \geq 0$ on $[0,D/2]\times [0,T)$;
    \item $|u(y,0)-u(x,0)| \leq 2\vp \left(\frac{d(x,y)}{2},0 \right)$
\end{enumerate}
Then 
\begin{equation*}
    |u(y,t)-u(x,t)| \leq 2\vp \left(\frac{d(x,y)}{2},t \right)
\end{equation*}
for all $x,y\in M$ and $t\in [0,T)$. 
\end{corollary}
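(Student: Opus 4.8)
The plan is to obtain Corollary \ref{corollary QK} as a standard consequence of Theorem \ref{thm MC QK} via a comparison principle for viscosity subsolutions and supersolutions of the one-dimensional equation \eqref{ODE}. First I would set up the comparison function carefully. Given $u$ a solution of \eqref{eq isotropic}, let $\w$ denote its modulus of continuity on $[0,D/2]\times[0,T)$; by Theorem \ref{thm MC QK}, $\w$ is a viscosity subsolution of $\w_t=\mathcal{F}\w$. The hypotheses (1)--(3) on $\vp$ say precisely that $\vp$ is a (classical, hence viscosity) supersolution of the same equation with $\w(\cdot,0)\le\vp(\cdot,0)$. The goal reduces to showing $\w(s,t)\le\vp(s,t)$ for all $(s,t)$, since $|u(y,t)-u(x,t)|\le 2\w(d(x,y)/2,t)$ by the very definition of modulus of continuity, and then (3) transfers through.

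The key steps, in order: (i) To run a clean maximum-principle argument and avoid boundary issues at $s=0$ and $s=D/2$, introduce for $\e>0$ the perturbed function $\vp_\e(s,t):=\vp(s,t)+\e e^{Ct}(1+s)$ (or a similar strictly-supersolution modification), and show that for $C$ large depending only on $\a,\b$ and the relevant curvature constants $4(m-1)T_\k+3T_{4\k}$, $\vp_\e$ is a strict supersolution: $(\vp_\e)_t>\mathcal{F}\vp_\e$. Here one uses that $\a,\b$ are positive and smooth, that $\vp'\ge 0$ so the coefficient structure is controlled, and that $T_\k$, $c_\k$ are smooth on the relevant interval. (ii) Argue by contradiction: if $\w>\vp_\e$ somewhere, pick the first time $t_1$ and a point $s_1$ where $\w-\vp_\e$ first reaches $0$ from below; at $(s_1,t_1)$ the smooth function $\vp_\e$ touches $\w$ from above, so the viscosity subsolution property of $\w$ gives $(\vp_\e)_t\le\mathcal{F}\vp_\e$ at $(s_1,t_1)$, contradicting strict supersolution. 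One must check $s_1\in(0,D/2)$: the Neumann/convexity setup and the definition of $\w$ (together with condition (3) at $t=0$ and monotonicity $\vp'\ge 0$) exclude the endpoints, exactly as in the Riemannian case \cite{AC13} and the K\"ahler case \cite{LW21TAMS}. (iii) Conclude $\w\le\vp_\e$ for every $\e>0$, then let $\e\to 0$ to get $\w\le\vp$ on $[0,D/2]\times[0,T)$. (iv) Finally, for arbitrary $x,y\in M$ with $d(x,y)=2s$, combine $|u(y,t)-u(x,t)|\le 2\w(s,t)\le 2\vp(s,t)=2\vp(d(x,y)/2,t)$, which is the claimed estimate; the initial condition (3) is what guarantees $\w(\cdot,0)\le\vp(\cdot,0)$ so the argument can start.

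I expect the main obstacle to be the viscosity-theoretic bookkeeping rather than any deep new idea: one must ensure the test-function touching argument is legitimate (existence of a first contact point requires either compactness in $t$ on $[0,t_1]$ or an exhaustion argument with the $\e e^{Ct}$ term forcing contact at an interior time), and one must verify with care that the contact point stays in the open interval $(0,D/2)\times(0,T)$ so that the subsolution inequality from Theorem \ref{thm MC QK} actually applies there — Theorem \ref{thm MC QK} is stated only for touching points in $(0,D/2)\times(0,T)$. The endpoint exclusion at $s=0$ uses that $\w(0,t)=0$ while $\vp_\e(0,t)>0$, and at $s=D/2$ uses the geodesic/diameter structure as in \cite{AC13}; the boundary case (strictly convex $\p M$) is handled verbatim as in \cite{LW21TAMS}. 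Since all of this is routine given the cited works, I would keep the proof short, citing \cite{AC13} and \cite{LW21TAMS} for the details of the comparison principle and merely indicating the perturbation and the endpoint exclusion.
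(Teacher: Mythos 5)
The paper gives no proof of Corollary \ref{corollary QK} --- it says only ``The following corollary is immediate'' --- so the relevant comparison is with the standard comparison-principle argument in \cite{AC13} and \cite{LW21TAMS}, which you correctly identify as the intended route. Your outline (perturb $\vp$ to a strict supersolution, invoke the viscosity-subsolution property of $\w$ from Theorem \ref{thm MC QK} at a first touching point, exclude the endpoints, let $\e\to0$) is the right argument and in the same spirit as those references.

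Two small refinements worth flagging. First, your perturbation $\vp_\e=\vp+\e e^{Ct}(1+s)$ changes $\vp'_\e=\vp'+\e e^{Ct}$, and since $\a,\b$ in the operator $\mathcal{F}$ are nonlinear in $\vp'$, it is not automatic that $\mathcal{F}\vp_\e\approx\mathcal{F}\vp$; making this quantitative forces $C$ to depend on local bounds for $\vp',\vp''$ and the Lipschitz constants of $\a,\b$, which is an avoidable nuisance. The cleaner and standard choice is a purely time-dependent perturbation, e.g.\ $\vp_\e=\vp+\e e^{Ct}$ (or $\vp+\e(1+t)$): then $\vp'_\e=\vp'$ and $\vp''_\e=\vp''$, so $\mathcal{F}\vp_\e=\mathcal{F}\vp$ exactly and $(\vp_\e)_t=\vp_t+\e C e^{Ct}>\mathcal{F}\vp_\e$ for any $C>0$ with no extra hypotheses. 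Second, the exclusion of a first touching at $s=0$ requires $\vp_\e(0,t)>\w(0,t)=0$; this is where one implicitly uses $\vp(0,t)\ge0$, which is not literally among hypotheses (1)--(3) of the corollary as stated (condition (3) only gives $\vp(0,0)\ge0$). This is a standing implicit assumption in \cite{AC13} and \cite{LW21TAMS} as well, so it is not a defect specific to your proposal, but it is worth noting that the conclusion $\vp(0,t)\ge0$ for $t>0$ does not follow from $\vp_t\ge\mathcal{F}\vp$ and $\vp'\ge0$ alone and should either be added as a hypothesis or checked in each application.
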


%\section{Eigenvalue Lower Bound}

As in the Riemannian or K\"ahler case, the modulus of continuity estimate implies lower bounds for the first nonzero eigenvalue of the Laplacian on a quaternion-K\"ahler manifold. We restate Theorem \ref{thm eigenvalue QK} here. 
\begin{thm}\label{thm eigenvalue QK 2}
Let $(M^m, g, I, J, K)$ be a compact quaternion-K\"ahler manifold (possibly with a smooth strictly convex boundary) of quaternionic dimension $m$ and diameter $D$. Suppose that the scalar curvature of $M$ is bounded from below by $16m(m+2)\k$ for some $\k \in \R$. 
Let $\mu_1$ be the first nonzero eigenvalue of the Laplacian on $M$ (with Neumann boundary condition if $\p M \neq \emptyset$). Then 
\begin{equation*}
    \mu_1 \geq \bar{\mu}_1(m,\k,D)
\end{equation*}
where $\bar{\mu}_1(m,\k,D)$ is the first nonzero Neumann eigenvalue of the one-dimensional eigenvalue problem
\begin{equation*}\label{}
    \vp''-\left(4(m-1)T_\k +3T_{4\k}\right)\vp' =-\l \vp 
\end{equation*}
on $[-D/2,D/2]$, and $T_\k$ is defined in \eqref{def T}.
\end{thm}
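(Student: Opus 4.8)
The plan is to deduce Theorem~\ref{thm eigenvalue QK 2} from the modulus of continuity estimate of Theorem~\ref{thm MC QK} (via Corollary~\ref{corollary QK}) by the now-standard ``large time'' argument of Andrews and Clutterbuck, exactly as in the Riemannian case \cite{AC13} and the K\"ahler case \cite{LW21TAMS}. First I would let $u_0$ be a first eigenfunction, so $\Delta u_0 = -\mu_1 u_0$ (with Neumann condition if $\p M \neq \emptyset$), and let $u(x,t) = e^{-\mu_1 t} u_0(x)$ be the corresponding solution of the heat equation, which is the special case $\a=\b=1$ of \eqref{eq isotropic}. Normalize $u_0$ so that $\min u_0 = -1$ and $\max u_0 \le 1$ (this can always be arranged since $\int_M u_0 = 0$ forces $u_0$ to change sign; replace $u_0$ by $-u_0$ if necessary). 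The strategy is to choose a comparison function of the form $\vp(s,t) = e^{-\bar\mu_1 t}\, C\, \psi(s)$ where $\psi$ is a first Neumann eigenfunction of the one-dimensional problem on $[-D/2,D/2]$ with eigenvalue $\bar\mu_1 := \bar\mu_1(m,\k,D)$, and then let $t \to \infty$.

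The key steps, in order: (1) Establish that the one-dimensional operator $L\psi := \psi'' - \big(4(m-1)T_\k + 3T_{4\k}\big)\psi'$ has a first nonzero Neumann eigenfunction $\psi$ on $[-D/2,D/2]$ that is odd, strictly increasing, with $\psi' > 0$ on the open interval and $\psi'(\pm D/2) = 0$; this is a Sturm--Liouville fact, and one normalizes $\psi$ so that $\max\psi = \psi(D/2) = 1$. (2) Verify the hypotheses of Corollary~\ref{corollary QK} for $\vp(s,t) = e^{-\bar\mu_1 t}\psi(s)$ with a suitable constant: since $\heat$-type operator here is linear, $\vp_t = -\bar\mu_1 e^{-\bar\mu_1 t}\psi = e^{-\bar\mu_1 t}L\psi = \mathcal{F}\vp$ (with $\a=\b=1$), so (i) holds with equality; (ii) $\vp' = e^{-\bar\mu_1 t}\psi' \ge 0$ holds by the monotonicity of $\psi$; (iii) the initial condition $|u_0(y) - u_0(x)| \le 2C\psi(d(x,y)/2)$ needs a constant $C$ large enough, and since $|u_0(y)-u_0(x)| \le 2$ while $\psi(d(x,y)/2) \ge \psi(0) = 0$ degenerates when $x=y$, one actually needs the sharper observation that near the diagonal both sides vanish to first order; the standard fix (as in \cite{AC13}) is to first enlarge $\vp$ slightly, or to use that $\psi$ achieves a positive lower bound on $d(x,y) \ge \e$ and handle $d(x,y) < \e$ separately — but the cleanest route is: since $\psi' > 0$ on $(-D/2,D/2)$ and $u_0$ is Lipschitz, a finite $C$ works, because both the numerator and $\psi$ are continuous and $\psi > 0$ away from $s=0$, while as $s \to 0$ one compares Lipschitz constants (here one may need to multiply $u_0$ by a small constant or, equivalently, take $C$ depending on $\|\n u_0\|_\infty$ and $\psi'(0)$). (3) Apply Corollary~\ref{corollary QK} to conclude $|u(y,t) - u(x,t)| \le 2C e^{-\bar\mu_1 t}\psi(d(x,y)/2)$ for all $t \ge 0$. (4) Pick $x_t, y_t$ realizing $\max u(\cdot,t)$ and $\min u(\cdot,t)$; then $u(y_t,t) - u(x_t,t) = e^{-\mu_1 t}(\max u_0 - \min u_0) \ge e^{-\mu_1 t}$, while the right side is $\le 2C e^{-\bar\mu_1 t}\psi(D/2) = 2C e^{-\bar\mu_1 t}$. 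Hence $e^{-\mu_1 t} \le 2C e^{-\bar\mu_1 t}$ for all $t \ge 0$, and letting $t \to \infty$ forces $\mu_1 \ge \bar\mu_1$.

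I expect the main obstacle to be step (2)(iii), the verification that the initial modulus of continuity of $u_0$ is dominated by $2C\psi(d(x,y)/2)$ for some finite constant $C$ — this is the one place where the degeneracy of $\psi$ at $s=0$ interacts with the requirement that $\vp' \ge 0$ everywhere, and it is handled in \cite{AC13} and \cite{LW21TAMS} by a short argument showing that because $\psi$ is smooth with $\psi'(0) > 0$, the ratio $\big(u_0(y)-u_0(x)\big)/\psi(d(x,y)/2)$ extends continuously (hence is bounded) on $M \times M$; alternatively one observes that it suffices to find \emph{any} admissible $\vp$, and one may use the freedom to add a small multiple of a fixed positive supersolution, or simply rescale. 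Everything else is routine: the Sturm--Liouville theory in step (1) is classical, and the passage to the limit in step (4) is immediate. One should also remark, as the authors surely will, that the convex/Neumann boundary case is already fully incorporated because Theorem~\ref{thm MC QK} and Corollary~\ref{corollary QK} were proved in that generality, so no separate treatment of $\p M \neq \emptyset$ is needed here.
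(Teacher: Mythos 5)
Your proposal is correct and follows essentially the same route the paper takes: the authors explicitly defer to the large‑time Andrews--Clutterbuck argument ``as in the K\"ahler case in \cite{LW21TAMS},'' and what you lay out — taking $u(x,t)=e^{-\mu_1 t}u_0(x)$, comparing against $\vp(s,t)=Ce^{-\bar\mu_1 t}\psi(s)$ with $\psi$ the odd increasing one-dimensional Neumann eigenfunction, invoking Corollary~\ref{corollary QK}, and sending $t\to\infty$ — is exactly that argument. The only nits are cosmetic: in step (4) you should take $u(x_t,t)-u(y_t,t)$ (max minus min) to get a positive quantity, and in (2)(iii) the rescaling should be applied to $\psi$ (enlarging $C$), not to $u_0$; the substance, including the observation that the boundedness of the ratio $\bigl(u_0(y)-u_0(x)\bigr)/\psi\bigl(d(x,y)/2\bigr)$ near the diagonal follows from the Lipschitz bound on $u_0$ and $\psi'(0)>0$, is right.
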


\begin{proof}[Proof of Theorem \ref{thm eigenvalue QK 2}]
The proof is a slight modification of the proof in the K\"ahler case in \cite{LW21TAMS}, so we leave the details to interested reader. 
\end{proof}

\section{Explicit Lower Bounds}

Let $a,b$ be positive integers and $\k_1,\k_2$ be nonnegative real numbers. 
Consider the one-dimensional eigenvalue problem
\begin{equation}\label{eq:5.1}
    \vp''-\left[2a(m-1)\sqrt{\k_2}\tan(\sqrt{\k_2}t)+2b\sqrt{\k_1} \tan(2\sqrt{\k_1}) \right]\vp'+\l \vp =0
\end{equation}
with Neumann boundary condition $\vp'(-D/2)=\vp'(D/2)=0$. 
\begin{proposition}\label{prop 5.1}
Let $\l_1$ be the first nonzero Neumann eigenvalue of \eqref{eq:5.1}. Then 
\begin{equation*}
     \l_1 \geq \sup_{s \in (0,1)}\left( 4s(1-s)\frac{\pi^2}{D^2}+ s(2a(m-1)\k_2+4b\k_1)\right).
\end{equation*}
\end{proposition}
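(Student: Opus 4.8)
The plan is to reduce \eqref{eq:5.1} to a purely one-dimensional statement and then to run the argument of Shi and Zhang \cite{SZ07}, the only new ingredient being the identification of the structural properties of the drift. Write \eqref{eq:5.1} as $\vp''-V\vp'+\l\vp=0$ on $[-D/2,D/2]$ with Neumann conditions $\vp'(\pm D/2)=0$, where
\[
V(t):=2a(m-1)\sqrt{\k_2}\tan(\sqrt{\k_2}t)+2b\sqrt{\k_1}\tan(2\sqrt{\k_1}t),
\]
and let $\l_1$ be its first nonzero eigenvalue. The point is that $V$ is odd, $V(0)=0$, $V\ge0$ on $[0,D/2]$ (since $\k_1,\k_2\ge0$), and, because $\sec^2\ge1$,
\[
V'(t)=2a(m-1)\k_2\sec^2(\sqrt{\k_2}t)+4b\k_1\sec^2(2\sqrt{\k_1}t)\ \ge\ 2a(m-1)\k_2+4b\k_1=:C\ \ge 0,
\]
so that $V(t)\ge Ct$ on $[0,D/2]$ (equivalently, the weighted interval with density $e^{-\int_0^tV}$ has Bakry--Emery Ricci curvature bounded below by the constant $C$). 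These are exactly the hypotheses used in \cite{SZ07} in the Riemannian case $V(t)=(n-1)\sqrt\k\tan(\sqrt\k t)$, $C=(n-1)\k$; since that proof uses only $V(0)=0$, oddness of $V$, and $V'\ge C$, it applies here verbatim, and I outline the steps below.

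First I would fix the eigenfunction $\vp$ corresponding to $\l_1$. Since the equation is invariant under $(t,\vp)\mapsto(-t,-\vp)$, and a first nonzero Neumann eigenfunction on an interval is simple, has a single interior zero, and is strictly monotone (by the divergence form $(e^{-\int V}\vp')'=-\l_1 e^{-\int V}\vp$, $e^{-\int V}\vp'$ is strictly monotone on any subinterval where $\vp$ has constant sign, hence cannot vanish at both ends), $\vp$ may be taken odd and increasing, with $\vp(0)=0$, $\vp(D/2)=1$, so $0\le\vp\le1$ and $\vp'\ge0$ on $[0,D/2]$. Next I would establish the rough gradient estimate: the function $P:=(\vp')^2+\l_1\vp^2$ satisfies $P'=2V(\vp')^2\ge0$ on $[0,D/2]$ and $P(D/2)=\l_1$, hence $(\vp')^2\le\l_1(1-\vp^2)$ there; integrating $\vp'/\sqrt{1-\vp^2}\le\sqrt{\l_1}$ over $[0,D/2]$ gives at once $\l_1\ge\pi^2/D^2$, the $s=\tfrac12$, curvature-free corner of the asserted inequality. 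Writing $\vp=\sin\theta$ with $\theta\colon[0,D/2]\to[0,\pi/2]$, $\theta(0)=0$, $\theta(D/2)=\pi/2$, $\theta'>0$, one restates the gradient estimate as $\theta'\le\sqrt{\l_1}$, checks $\theta'(D/2)=\sqrt{\l_1}$, and obtains from the equation the differential inequality
\[
\theta''=V\theta'+\tan\theta\,\big((\theta')^2-\l_1\big)\ \le\ V\theta' \qquad\text{on }[0,D/2].
\]

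The remaining and central step is the interpolation that upgrades $\l_1\ge\pi^2/D^2$ to $\l_1\ge 4s(1-s)\pi^2/D^2+sC$ for every $s\in(0,1)$: one feeds the lower bound $V(t)\ge Ct$ back into the ODE for $\theta$ (equivalently, compares $\vp$ with the eigenfunction of the model operator $\psi''-Ct\,\psi'=-\l\psi$ of constant Bakry--Emery curvature $C$ on $[-D/2,D/2]$), and plays this convexity-type information against the length constraint $\theta(D/2)-\theta(0)=\pi/2$; this is precisely the computation carried out in \cite{SZ07}, and it produces the claimed family of inequalities. Taking the supremum over $s\in(0,1)$ and substituting $C=2a(m-1)\k_2+4b\k_1$ then gives the proposition. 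I expect this interpolation to be the only genuine obstacle: everything else is the standard $P$-function/gradient-estimate machinery, and the sole ``modification'' of \cite{SZ07} is the elementary observation above that the two-term drift $V$ still satisfies $V(0)=0$, $V$ odd, and $V'\ge C$.
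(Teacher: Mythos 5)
Your opening paragraph correctly isolates the structural properties of the drift that the Shi--Zhang argument needs---$V$ odd with $V(0)=0$, and $V'(t)\ge C:=2a(m-1)\kappa_2+4b\kappa_1$ via $\sec^2\ge 1$---and that observation is indeed the one conceptual point one has to make before invoking \cite{SZ07}. But the route you then sketch is not the one the paper (following \cite{SZ07}) takes, and it leaves the central step unproved. The paper's proof is an integral multiplier argument: differentiate \eqref{eq:5.1} to get an ODE for $y=\varphi'$ on $[-D/2,D/2]$ with Dirichlet condition $y(\pm D/2)=0$ inherited from the Neumann condition on $\varphi$; multiply by $y^{\gamma-1}$ ($\gamma>1$, $y$ of one sign since $\varphi$ is monotone) and integrate; handle the drift terms by integration by parts, which produces the $\sec^2$ factors with favourable sign; bound $\int y^{\gamma-1}y''$ by $-\tfrac{4(\gamma-1)}{\gamma^2}\tfrac{\pi^2}{D^2}\int y^\gamma$ via Wirtinger applied to $y^{\gamma/2}$; and finally use $\sec^2\ge 1$ and set $s=1-1/\gamma$. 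No pointwise gradient estimate, no oddness of the eigenfunction, and no $\theta=\arcsin\varphi$ substitution enter.

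What you sketch instead is the opening of a Zhong--Yang-style gradient estimate: take $\varphi$ odd and increasing, form $P=(\varphi')^2+\lambda_1\varphi^2$, note $P'=2V(\varphi')^2\ge 0$ (which uses only $V\ge 0$, not $V'\ge C$), deduce $(\varphi')^2\le\lambda_1(1-\varphi^2)$, and integrate to get $\lambda_1\ge\pi^2/D^2$. This is correct as far as it goes, but it recovers only the curvature-free piece of the estimate. The interpolation to $4s(1-s)\pi^2/D^2+sC$---which is the entire content of the proposition---is then dispatched with ``this is precisely the computation carried out in \cite{SZ07} \dots and it produces the claimed family of inequalities.'' That is a promissory note, not a proof: the step of ``feeding $V(t)\ge Ct$ into the ODE for $\theta$ and playing it against the length constraint'' is not carried out, it is not what the cited reference does as used in this paper, and the Zhong--Yang machinery does not obviously yield the full $\sup_s$ family without substantial further work (e.g.\ a curvature-modified $P$-function). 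As written, your proposal establishes only $\lambda_1\ge\pi^2/D^2$. To complete it, carry out the $y^{\gamma-1}$ multiplier computation sketched above.
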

\begin{proof}
Let $\vp$ be the eigenfunction of \eqref{eq:5.1} associated to the first nonzero Neumann eigenvalue $\l_1$. 
It's easy to observe that the function $y=\vp'$ satisfies the ODE
\begin{eqnarray*}
&& y''-\left[2a(m-1)\sqrt{\k_2}\tan(\sqrt{\k_2}t)+2b\sqrt{\k_1} \tan(2\sqrt{\k_1}) \right]y'  \\ 
&& -\left[2a(m-1)\k_1\sec^2(\sqrt{\k_2}t)+4b\k_1 \sec^2(2\sqrt{\k_1}) \right]y +\l y =0.
\end{eqnarray*}
with Dirichlet boundary condition $y(-D/2)=y(D/2)=0$. 
Multiplying by $y^{\g-1}$ for some $\g>1$ and integrating both sides, we have 
\begin{eqnarray}\label{eq:5.2}
&& \int_{-\frac{D}{2}}^{\frac{D}{2}} y''y^{\g-1} dt +\l \int_{-\frac{D}{2}}^{\frac{D}{2}} y^\g dt\\ \nonumber
&=& \int_{-\frac{D}{2}}^{\frac{D}{2}} \left(2a(m-1)\sqrt{\k_2}  \tan(\sqrt{\k_2}t) +2b\sqrt{k_1}\tan(2\sqrt{\k_1}t)  \right)y' y^{\g-1} dt \\ \nonumber
&& +\int_{-\frac{D}{2}}^{\frac{D}{2}} \left(2a(m-1)\k_2\sec^2(\sqrt{\k_2}t)+4b\k_1 \sec^2(2\sqrt{\k_1})  \right)y^{\g} dt
\end{eqnarray}
By integration by parts, we get
\begin{eqnarray}\label{eq:5.4}
&&\sqrt{\k_2} \int_{-\frac{D}{2}}^{\frac{D}{2}} \tan(\sqrt{\k_2t})y^{\g-1}y' dt \\
   & =&\frac{\sqrt{\k_2}}{\g} \int_{-\frac{D}{2}}^{\frac{D}{2}} \tan(\sqrt{\k_2t})(y^{\g})' dt \nonumber\\
    &=&-\frac{\k_2}{\g} \int_{-\frac{D}{2}}^{\frac{D}{2}}\sec^2(\sqrt{\k_2}t) y^{\g} dt.\nonumber
\end{eqnarray}
Similarly, we have
that 
\begin{equation}\label{eq:5.5}
    2\sqrt{\k_1}\int_{-\frac{D}{2}}^{\frac{D}{2}}\tan(2\sqrt{k_1} t) y' y^{\g-1} dt =-\frac{4\k_1}{\g} \int_{-\frac{D}{2}}^{\frac{D}{2}} \sec^2(2\sqrt{k_1}t) y^{\g} dt 
\end{equation}

On the other hand, integration by parts implies
\begin{equation*}
    \int_{-\frac{D}{2}}^{\frac{D}{2}} y^{\g-1} y'' dt =-(\g-1) \int_{-\frac{D}{2}}^{\frac{D}{2}} y^{\g-2}y'y'\, dt=-\frac{4(\g-1)}{\g^2}\int_{-\frac{D}{2}}^{\frac{D}{2}} \left((y^{\frac{\g}{2}})'\right)^2 dt.
\end{equation*}
By Wirtinger's inequality, 
%which says for any $C^1$ function $f$ satisfying $f(0)=f(c)=0$, 
%\begin{equation*}
%    \pi^2 \int_0^c |f|^2 dt \leq c^2 \int_0^c |f'|^2 dt,
%\end{equation*}
we get that 
\begin{equation}\label{eq:5.6}
    \int_{-\frac{D}{2}}^{\frac{D}{2}} y^{\g-1} y'' dt \leq -\frac{4(\g-1)}{\g^2} \frac{\pi^2}{D^2} \int_{-\frac{D}{2}}^{\frac{D}{2}} y^\g dt
\end{equation} 

Plugging \eqref{eq:5.4}, \eqref{eq:5.5}, and \eqref{eq:5.6} into \eqref{eq:5.2}, we obtain 
\begin{eqnarray*}
&&-\frac{4(\g-1)}{\g^2} \frac{\pi^2}{D^2} \int_{-\frac{D}{2}}^{\frac{D}{2}} y^\g dt + \l \int_{-\frac{D}{2}}^{\frac{D}{2}}  y^\g\, dt \\
&\geq& \left(1-\frac{1}{\g} \right)\int_{-\frac{D}{2}}^{\frac{D}{2}} \left(2a(m-1)\k_2\sec^2(\sqrt{\k_2}t)+4b\k_1 \sec^2(2\sqrt{\k_1}t)  \right)y^{\g} dt \\
&\geq& \left(1-\frac{1}{\g}\right)\left(2a(m-1)\k_2+4b\k_1   \right)
\int_{-\frac{D}{2}}^{\frac{D}{2}} y^{\g} dt
\end{eqnarray*}
where we have used $\sec^2(x)\geq 1$ for all $x$ in the last line.
It then follows that we must have 
\begin{equation*}
    \l \geq \frac{4(\g-1)}{\g^2} \frac{\pi^2}{D^2} + \left(1-\frac{1}{\g} \right) (2a(m-1)\k_2+4b\k_1 )
\end{equation*}
Letting $s=1-\frac{1}{\g}$, we have
\begin{equation*}
    \l\geq 4s(1-s)\frac{\pi^2}{D^2}+ s(2a(m-1)\k_2+4b\k_1).
\end{equation*}
Since $\g>1$ is arbitrary, we get 
\begin{equation*}
    \l \geq \sup_{s \in (0,1)}\left( 4s(1-s)\frac{\pi^2}{D^2}+ s(2a(m-1)\k_2+4b\k_1)\right).
\end{equation*}
%In particular, $s=\frac{1}{2}$ yields 
%\begin{equation*}
%\end{equation*}
\end{proof}

\begin{proof}[Proof of Proposition \ref{prop 1.1} and \ref{prop 1.2}]
Proposition \ref{prop 1.1} is a special case of Proposition \ref{prop 5.1} with $a=b=1$ and Proposition \ref{prop 1.2} is a special case of Proposition \ref{prop 5.1} with $a=2$ and $b=3$. 
\end{proof}

\section{First Dirichlet Eigenvalue}
%\section{A Comparison Theorem for distance to the boundary}
Throughout this section, $d(x,\p M)$ denotes the distance function to $\p M$ given by 
\begin{equation*}
    d(x,\p M)=\inf\{d(x,y) : y \in \p M \}
\end{equation*}
and $R$ denotes the inradius of $M$ given by
\begin{equation*}
    R=\sup \{d(x,\p M) :x \in  M \}.
\end{equation*}
For convenience, denote by $C_{\kappa, \Lambda}(t)$ the unique solution of the initial value problem
\begin{equation}\label{C def}
    \begin{cases} 
    \phi''+\kappa \phi =0, \\
    \phi(0)=1,     \phi'(0) =-\Lambda,
    \end{cases}
\end{equation}
and define $T_{\kappa,\Lambda}$ for $\kappa, \Lambda \in \R$ by 
\begin{equation}\label{def T kappa Lambda}
    T_{\kappa,\Lambda} (t):=- \frac{C'_{\kappa, \Lambda}(t)}{C_{\kappa, \Lambda}(t)}.
\end{equation}

In the Riemannian setting, Li and Yau \cite{LY80} and Kause \cite{Kasue84} proved the following well-known result. 
\begin{thm}\label{Thm B}
Let $(M^n,g)$ be a compact Riemannian manifold with smooth boundary $\p M \neq \emptyset$. 
Suppose that the Ricci curvature of $M$ is bounded from below by $(n-1)\kappa$ and the mean curvature of $\p M$ is bounded from below by $(n-1)\Lambda$ for some $\kappa, \Lambda \in \R$. 
Let $\l_1$ be the first Dirichlet eigenvalue of the Laplacian on $M$. Then
\begin{align*}
   & \l_{1} \geq \bar\l_1(n,\k,\Lambda,R) , 
\end{align*}
where $\bar\l_1(n,\k,\Lambda,R)$
is the first eigenvalue of the one-dimensional eigenvalue problem 
\begin{equation}\label{eq 1.4}
    \begin{cases}
   \vp'' -(n-1)T_{\kappa, \Lambda} \vp' =-\l\vp, \\
    \vp(0)=0,     \vp'(R)=0.
    \end{cases}
\end{equation}
%Moreover, the equality occurs if and only if $(M,g)$ is a $(\kappa, \Lambda)$-model space defined in Definition
\end{thm}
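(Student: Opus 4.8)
The plan is to deduce Theorem~\ref{Thm B} from a Laplace comparison theorem for the distance-to-the-boundary function $\rho(x):=d(x,\partial M)$, combined with a comparison against the one-dimensional model problem~\eqref{eq 1.4}; this is the scheme that is later adapted to the quaternion-K\"ahler setting in Theorem~\ref{Thm Dirichlet}.

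\textbf{Step 1 (Laplace comparison for $\rho$).} Fix an interior point $x$ at which $\rho$ is smooth and let $\gamma\colon[0,\rho(x)]\to M$ be a unit-speed minimizing geodesic from $\partial M$ to $x$, so that $p:=\gamma(0)\in\partial M$, $\gamma'(0)$ is the inward unit normal, and $\gamma'(t)=\nabla\rho(\gamma(t))$ along $\gamma$. Restricted to $\gamma'(t)^{\perp}$, the self-adjoint operator $S(t):=\operatorname{Hess}\rho|_{\gamma(t)}$ solves the matrix Riccati equation $S'+S^{2}+\mathcal R_{\gamma'}=0$, where $\mathcal R_{\gamma'}$ is the Jacobi curvature operator $Y\mapsto R(Y,\gamma')\gamma'$ along $\gamma$ (whose trace is $\Ric(\gamma',\gamma')$), and with initial value $\operatorname{tr}S(0)$ equal to minus the mean curvature of $\partial M$ at $p$ (in our sign convention), hence $\operatorname{tr}S(0)\le-(n-1)\Lambda$. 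Taking the trace and using Cauchy--Schwarz in the form $\operatorname{tr}(S^{2})\ge(\operatorname{tr}S)^{2}/(n-1)$ together with $\Ric(\gamma',\gamma')\ge(n-1)\kappa$, the scalar $h(t):=\Delta\rho(\gamma(t))=\operatorname{tr}S(\gamma(t))$ satisfies $h'\le-\tfrac{1}{n-1}h^{2}-(n-1)\kappa$ with $h(0)\le-(n-1)\Lambda$. Since $-(n-1)T_{\kappa,\Lambda}$ solves the corresponding Riccati equation with equality and has initial value $-(n-1)\Lambda$ (because $T_{\kappa,\Lambda}'=\kappa+T_{\kappa,\Lambda}^{2}$ and $T_{\kappa,\Lambda}(0)=\Lambda$, by~\eqref{C def} and~\eqref{def T kappa Lambda}), Sturm comparison gives
\begin{equation*}
  \Delta\rho(\gamma(t))\le-(n-1)\,T_{\kappa,\Lambda}(t),\qquad 0<t\le\rho(x).
\end{equation*}
A Bonnet--Myers-type argument for manifolds with boundary ensures the inradius $R$ does not exceed the first zero of $C_{\kappa,\Lambda}$, so the model is nonsingular on $[0,R]$ and the estimate is valid there. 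Finally, by Calabi's support-function trick the inequality $\Delta\rho\le-(n-1)T_{\kappa,\Lambda}(\rho)$ holds in the barrier (equivalently, distributional) sense at every interior point, including on the cut locus of $\partial M$.

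\textbf{Step 2 (the comparison function).} Let $\bar\varphi$ be a first eigenfunction of~\eqref{eq 1.4} on $[0,R]$ for the eigenvalue $\bar\lambda_{1}=\bar\lambda_{1}(n,\kappa,\Lambda,R)$. A standard Sturm--Liouville analysis shows it may be normalized so that $\bar\varphi>0$ and $\bar\varphi'>0$ on $(0,R)$, with $\bar\varphi(0)=0$ and $\bar\varphi'(R)=0$. Set $w:=\bar\varphi\circ\rho$ on $M$; then $w$ is Lipschitz, $w>0$ on $M\setminus\partial M$, and $w=0$ on $\partial M$. Using $|\nabla\rho|=1$, $\bar\varphi'\ge0$, Step 1, and the chain rule,
\begin{equation*}
  \Delta w=\bar\varphi''(\rho)\,|\nabla\rho|^{2}+\bar\varphi'(\rho)\,\Delta\rho\le\bar\varphi''(\rho)-(n-1)T_{\kappa,\Lambda}(\rho)\,\bar\varphi'(\rho)=-\bar\lambda_{1}\,w
\end{equation*}
in the barrier/distributional sense on $M\setminus\partial M$, the last step being the ODE in~\eqref{eq 1.4}.

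\textbf{Step 3 (comparison of eigenvalues).} Let $u>0$ be a first Dirichlet eigenfunction, $\Delta u=-\lambda_{1}u$ in $M$ and $u=0$ on $\partial M$. Pairing the inequality of Step 2 with $u\ge0$, and noting that $u$ and $w$ both vanish on $\partial M$ so that the weak form of Green's second identity has no boundary term, we obtain
\begin{equation*}
  -\lambda_{1}\int_{M}uw\,dV=\int_{M}w\,\Delta u\,dV=\int_{M}u\,\Delta w\,dV\le-\bar\lambda_{1}\int_{M}uw\,dV .
\end{equation*}
Since $\int_{M}uw\,dV>0$ this forces $\lambda_{1}\ge\bar\lambda_{1}(n,\kappa,\Lambda,R)$, which is the claim.

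\textbf{Main obstacle.} The essential difficulty is Step 1: making the Laplace comparison rigorous across the cut locus of $\partial M$ (Calabi's trick, or the semiconcavity of $\rho$ in the interior), confirming that the inradius lies in the admissible range so the one-dimensional model is nonsingular on $[0,R]$, and keeping the sign conventions for $T_{\kappa,\Lambda}$ and for the mean curvature of $\partial M$ mutually consistent. With Step 1 in hand, the Sturm--Liouville properties of $\bar\varphi$ in Step 2 and the weak Green identity in Step 3 are routine.
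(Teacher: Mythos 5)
The paper does not actually prove Theorem~\ref{Thm B}; it cites it as the classical result of Li--Yau \cite{LY80} and Kasue \cite{Kasue84}. Your argument is nevertheless the Riemannian specialization of precisely the strategy the paper employs for its quaternion-K\"ahler analogue (Theorem~\ref{Thm Dirichlet} via Theorem~\ref{Thm comparison distance to boundary}): establish a Laplace comparison $\Delta \rho \le -(n-1)T_{\kappa,\Lambda}(\rho)$ for $\rho = d(\cdot,\partial M)$ in a weak sense, build the test function $\bar\varphi\circ\rho$ from a monotone first eigenfunction of the one-dimensional model, and close with Barta's inequality. The one place you diverge technically is in obtaining the Laplace comparison: you trace the matrix Riccati equation for $\operatorname{Hess}\rho$ along a normal geodesic and handle the cut locus via Calabi's barrier trick, whereas the paper (in its quaternion-K\"ahler proof) constructs an explicit upper barrier $\bar d$ via the second variation of arclength and works in the viscosity framework. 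These are standard, interchangeable devices yielding the same inequality, so your route is not genuinely different, just a mild variant. Your sign bookkeeping for $T_{\kappa,\Lambda}$ (namely $T_{\kappa,\Lambda}(0)=\Lambda$, $T_{\kappa,\Lambda}'=\kappa+T_{\kappa,\Lambda}^2$) and the initial condition $\operatorname{tr}S(0)\le -(n-1)\Lambda$ are consistent, the monotonicity $\bar\varphi,\bar\varphi'>0$ on $(0,R)$ follows from integrating $(C_{\kappa,\Lambda}^{\,n-1}\bar\varphi')'=-\bar\lambda_1 C_{\kappa,\Lambda}^{\,n-1}\bar\varphi$ backward from $\bar\varphi'(R)=0$, and the Green-identity step is the usual Barta argument with vanishing boundary terms. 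The proof is correct.
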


In \cite{LW21TAMS}, the authors of the present paper obtained an analogous theorem in the K\"ahler setting.
\begin{thm}\label{Thm C}
Let $(M^m,g,J)$ be a compact K\"ahler manifold with smooth nonempty boundary $\p M$. 
Suppose that the holomorphic sectional curvature is bounded from below by $4\k_1$ and the orthogonal Ricci curvature is bounded from below by $2(m-1)\k_2$ for some $\k_1,\k_2 \in \R$, and the second fundamental form on $\p M$ is bounded from below by $\Lambda \in \R$. Let $\l_1$ be the first Dirichlet eigenvalue of the Laplacian on $M$. Then 
$$\l_1\ge\bar{\l}_1(m, \k_1,\k_2, \Lambda, R)$$
where $\bar{\l}_1(m, \k_1,\k_2, \Lambda, R)$ is the first eigenvalue of the one-dimensional eigenvalue problem
\begin{equation}\label{}
    \begin{cases}
   \vp''-\left(2(m-1)T_{\k_2, \Lambda}+T_{4\k_1, \Lambda} \right)\vp'  =-\l\vp, \\
    \vp(0)=0,     \vp'(R)=0.
    \end{cases}
\end{equation}
\end{thm}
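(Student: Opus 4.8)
The plan is to adapt the Li--Yau \cite{LY80}--Kasue \cite{Kasue84} proof of Theorem \ref{Thm B} to the K\"ahler setting, using the decomposition of the Ricci curvature into holomorphic sectional curvature plus orthogonal Ricci curvature exactly as in the modulus of continuity computation above; the relevant one-dimensional weight is $p:=2(m-1)T_{\k_2,\Lambda}+T_{4\k_1,\Lambda}$. Two ingredients are needed: first, a Laplace comparison theorem for the distance-to-the-boundary function $\rho(x)=d(x,\p M)$; and second, a maximum principle argument turning it into the lower bound for $\l_1$. Note that, just as Theorem \ref{thm Kahler} requires curvature bounds in distinguished directions rather than only a Ricci bound, here we need the \emph{pointwise} bound $\mathrm{II}_{\p M}\ge\Lambda$ rather than only a mean curvature bound.

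For the first ingredient I would show that, in the barrier sense on $M\setminus\p M$,
\[
\Delta\rho \;\le\; -\,p(\rho)\;=\;-\bigl(2(m-1)T_{\k_2,\Lambda}+T_{4\k_1,\Lambda}\bigr)(\rho).
\]
Fix $x$ not in the cut locus of $\p M$; then $\rho$ is smooth near $x$ and there is a unique minimizing unit-speed geodesic $\g:[0,\rho(x)]\to M$ from $\p M$ to $x$, meeting $\p M$ orthogonally at $p=\g(0)$. Take a parallel orthonormal frame $e_1=\g',\,e_2=J\g',\,e_3,\dots,e_{2m}$ along $\g$; crucially $e_2=J\g'$ is parallel because $\n J=0$, and parallel transport preserves the splitting $\R e_1\oplus\R e_2\oplus(\R e_1\oplus\R e_2)^\perp$. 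Since $\g\perp\p M$, every $e_i(0)$ with $i\ge2$ is tangent to $\p M$, so by the second variation of arc length together with the $\mathrm{II}_{\p M}$ boundary term (the focal index form), $\mathrm{Hess}\,\rho(e_i,e_i)(x)$ is bounded above by the index form of any field $f_i e_i$ with $f_i(\rho(x))=1$. For $i=2$ I would take $f_2=C_{4\k_1,\Lambda}/C_{4\k_1,\Lambda}(\rho(x))$ and use the holomorphic sectional curvature bound $R(\g',J\g',\g',J\g')\ge4\k_1$; for $i=3,\dots,2m$ I would take $f_i=C_{\k_2,\Lambda}/C_{\k_2,\Lambda}(\rho(x))$, sum, and use
\[
\sum_{i=3}^{2m}R(\g',e_i,\g',e_i)=\Ric(\g',\g')-R(\g',J\g',\g',J\g')=\Ric^\perp(\g',\g')\ge 2(m-1)\k_2 ,
\]
where $\Ric^\perp$ is the K\"ahler orthogonal Ricci curvature of \cite{NZ18}. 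Since $C_{\k,\Lambda}$ solves $\phi''+\k\phi=0$ with $\phi(0)=1,\ \phi'(0)=-\Lambda$, integration by parts leaves precisely $-T_{\k,\Lambda}(\rho(x))$ in each direction, the boundary terms from $\mathrm{II}_{\p M}\ge\Lambda$ cancelling against the boundary terms produced by integration by parts; summing over $i=2,\dots,2m$ gives the displayed inequality. (This uses that $C_{4\k_1,\Lambda}$ and $C_{\k_2,\Lambda}$ stay positive on $[0,R]$, which is implicit in the model problem being well posed.) The passage to the barrier sense across the cut locus of $\p M$ is the usual device of moving the footpoint slightly into $M$.

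For the second ingredient, let $\vp$ be the first eigenfunction of the model problem, with eigenvalue $\bar\l_1:=\bar\l_1(m,\k_1,\k_2,\Lambda,R)$. Writing the equation in self-adjoint form $(e^{-P}\vp')'=-\bar\l_1 e^{-P}\vp$ with $P'=p$ shows $\bar\l_1>0$, $\vp>0$ on $(0,R]$, and $\vp'>0$ on $[0,R)$. Let $u_1>0$ be the first Dirichlet eigenfunction of $M$, $-\Delta u_1=\l_1 u_1$, and put $\Psi:=\vp\circ\rho$; then $\Psi>0$ in $M\setminus\p M$, $\Psi=0$ on $\p M$, and by the Hopf lemma $h:=u_1/\Psi$ extends to a positive continuous function on $\bar M$. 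At an interior maximum $x_0$ of $h$ one has $\n u_1=h\,\n\Psi$ and $\Delta h\le0$ there, so expanding $\Delta(h\Psi)=\Delta u_1=-\l_1 h\Psi$ yields $(\Delta h)\Psi=-h(\l_1\Psi+\Delta\Psi)$ at $x_0$. Now $\Delta\Psi=\vp''(\rho)+\vp'(\rho)\Delta\rho$, and since $\vp'\ge0$ the comparison from the first ingredient together with the model ODE gives $\Delta\Psi\le-\bar\l_1\Psi$; hence $(\Delta h)\Psi\ge(\bar\l_1-\l_1)h\Psi$ at $x_0$, and as the left side is $\le0$ while $h\Psi>0$ we conclude $\l_1\ge\bar\l_1$. (When $\rho(x_0)=R$ one has $\vp'(\rho(x_0))=0$, which is exactly why the Neumann condition $\vp'(R)=0$ is imposed; a lower barrier for $\rho$ is used if $\rho$ is not smooth at $x_0$, and the possibility $x_0\in\p M$ is ruled out using the normal-derivative asymptotics of $u_1$ and $\Psi$.)

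The substantive step is the Laplace comparison, and within it the coefficient split $2(m-1)+1$: it rests on $\n J=0$, which makes $J\g'$ parallel along $\g$ so that this single direction decouples and sees exactly the holomorphic sectional curvature $R(\g',J\g',\g',J\g')\ge4\k_1$, while the remaining $2(m-1)$ directions see precisely $\Ric^\perp(\g',\g')\ge2(m-1)\k_2$. The bookkeeping that makes $\Lambda$ enter through the initial data of $C_{\k,\Lambda}$, and the extension of the comparison across the cut locus of $\p M$, are the points requiring care; the one-dimensional Sturm--Liouville facts and the final maximum principle step are routine and parallel \cite{LY80}, \cite{Kasue84}, and the K\"ahler arguments in \cite{LW21TAMS}.
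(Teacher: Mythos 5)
Your proposal is correct and takes essentially the same route as the paper and its companion \cite{LW21TAMS}: a second-variation comparison for $d(\cdot,\p M)$ in which the parallel direction $J\g'$ is tested with $C_{4\k_1,\Lambda}$ and the remaining $2(m-1)$ directions with $C_{\k_2,\Lambda}$ (exactly the split the paper carries out in the quaternion-K\"ahler Theorem \ref{Thm comparison distance to boundary}), followed by a Barta-type quotient maximum principle. One small correction: at a point where $\rho=d(\cdot,\p M)$ fails to be smooth you need a smooth \emph{upper} support function $\bar d\ge\rho$ with equality at $x_0$ (as the paper constructs), not a lower barrier, so that $x_0$ remains a maximum of $u_1/\vp(\bar d)$ and the comparison applies to $\vp(\bar d)$.
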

\begin{remark}
Diameter-independent lower bounds for $\l_1$  were obtained by Guedj, Kolev and Yeganefar \cite{GKY13}. It has been generalized via a $p$-Reilly formula to the first Dirichlet eigenvalue of the $p$-Laplacian by Blacker and Seto \cite{BS19} when $p \geq 2$. 
\end{remark}

Here we prove the following quaternion-K\"ahler version. 

\begin{thm}\label{Thm Dirichlet}
Let $(M^m,g,I,J,K)$ be a compact quaternion-K\"ahler manifold with smooth nonempty boundary $\p M$. 
Suppose that the scalar curvature is bounded from below by $16m(m+2)\k$ for some $\k \in \R$, and the second fundamental form on $\p M$ is bounded from below by $\Lambda \in \R$. Let $\l_1$ be the first Dirichlet eigenvalue of the Laplacian on $M$. Then 
$$\l_1\ge\bar{\l}_1(m, \k, \Lambda, R)$$
where $\bar{\l}_1(m, \k, \Lambda, R)$ is the first eigenvalue of the one-dimensional eigenvalue problem
\begin{equation}\label{eq 1.5}
    \begin{cases}
   \vp''-\left(4(m-1)T_{\k, \Lambda}+3T_{4\k, \Lambda} \right)\vp'  =-\l\vp, \\
    \vp(0)=0,     \vp'(R)=0.
    \end{cases}
\end{equation}
%and $R$ denotes the inradius of $M$ given by
%\begin{equation*}
%    R=\sup \{d(x,\p M) :x \in  M \}.
%\end{equation*}
\end{thm}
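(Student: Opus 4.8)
The plan is to follow the standard Li--Yau/Kasue scheme for Dirichlet eigenvalue lower bounds, adapted to the quaternion-K\"ahler setting exactly as Theorem~\ref{Thm C} was obtained in the K\"ahler case. The engine is a Laplace comparison theorem for the distance-to-boundary function $\rho(x):=d(x,\p M)$, and then a one-dimensional reduction via the model ODE on $[0,R]$.

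\emph{Step 1: Laplace comparison for $\rho$.} Fix a point $x_0 \in M\setminus\operatorname{Cut}(\p M)$ and let $\gamma:[0,\rho(x_0)]\to M$ be the unit-speed minimizing geodesic from $\p M$ to $x_0$, hitting $\p M$ orthogonally at $\gamma(0)=p$. Along $\gamma$ one has the Riccati/index-form estimate $\Delta\rho(x_0) = -\sum_{i=2}^{4m} I(V_i,V_i)$, where the $V_i$ are the $\rho$-Jacobi fields vanishing nowhere with $V_i(0)=e_i$ an orthonormal frame of $T_p(\p M)$ and $V_i(\rho(x_0))$ completing $\gamma'$ to a frame; bounding each by the index form of a suitable test field gives $\Delta\rho \le \sum_i \big(\langle V_i',V_i\rangle(\rho) + \text{curvature terms}\big)$. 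The key structural input is the splitting of the frame along $\gamma$ into the three ``quaternionic'' directions $I\gamma',J\gamma',K\gamma'$ (spanning, together with $\gamma'$, a parallel rank-$4$ subbundle by the defining property of $V\subset\operatorname{End}(TM)$ and the Lemma of Section~2, which gives $\sum R(\gamma',X_\bullet,\gamma',X_\bullet)=12\k$) and the remaining $4m-4$ orthogonal directions (where $\sum R(\gamma',\cdot,\gamma',\cdot)\ge 4(m-1)\k$ since $\Ric=4(m+2)\k$). Choosing the test fields $\phi\, e_i$ with $\phi=C_{4\k,\Lambda}/C_{4\k,\Lambda}(\text{?})$-type normalization on the three quaternionic directions and $\phi=C_{\k,\Lambda}/(\cdots)$ on the other directions --- exactly mirroring the choices $\eta=c_{4\k}(s)/c_{4\k}(s_0)$ and $\zeta=c_\k(s)/c_\k(s_0)$ in the proof of Theorem~\ref{thm MC QK}, but now with the boundary normalization encoded in $C_{\k,\Lambda}$ and the second-fundamental-form bound $\Lambda$ entering through $\phi(0)=1,\phi'(0)=-\Lambda$ --- yields
\begin{equation*}
    \Delta \rho(x_0) \le -\big(4(m-1)\,T_{\k,\Lambda} + 3\,T_{4\k,\Lambda}\big)\big(\rho(x_0)\big).
\end{equation*}
One must also check this holds in the barrier/distributional sense across $\operatorname{Cut}(\p M)$, which is routine by the Calabi trick.

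\emph{Step 2: One-dimensional reduction.} Let $v$ be the first Dirichlet eigenfunction on $[0,R]$ of the model problem \eqref{eq 1.5}, which may be taken positive on $(0,R]$ with $v(0)=0$, $v'>0$ on $[0,R)$, $v'(R)=0$ (these sign properties follow from Sturm--Liouville theory applied to \eqref{eq 1.5}, using that the coefficient $4(m-1)T_{\k,\Lambda}+3T_{4\k,\Lambda}$ is the relevant drift). Extend $v$ to a function on all of $[0,\infty)$, or just note $\rho\le R$, and consider the candidate $f:=v\circ\rho$ on $M$. Then $f=0$ on $\p M$, and using $|\n\rho|=1$ together with the comparison from Step~1 and $v'\ge 0$,
\begin{equation*}
    \Delta f = v''(\rho) + v'(\rho)\,\Delta\rho \le v''(\rho) - \big(4(m-1)T_{\k,\Lambda}+3T_{4\k,\Lambda}\big)(\rho)\, v'(\rho) = -\bar\l_1\, v(\rho) = -\bar\l_1 f
\end{equation*}
in the barrier sense. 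Hence $f$ is a nonnegative Dirichlet supersolution of $\Delta + \bar\l_1$, and the Barta-type argument (or: testing the Rayleigh quotient, or the maximum principle comparing $f$ with a true eigenfunction) gives $\l_1 \ge \bar\l_1(m,\k,\Lambda,R)$.

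\emph{Main obstacle.} The delicate point is Step~1: making the index-form comparison respect the quaternionic splitting of the normal frame along the geodesic $\gamma$ issuing orthogonally from $\p M$. Unlike the interior (geodesic-to-geodesic) estimates in Theorem~\ref{thm MC QK}, here the boundary imposes the initial condition $\phi(0)=1$, $\phi'(0)=-\Lambda$ on the test fields coming from the shape operator of $\p M$, so one needs the functions $C_{\k,\Lambda}$, $C_{4\k,\Lambda}$ and to verify that the boundary terms in the index form combine into precisely $-T_{\k,\Lambda}$ and $-T_{4\k,\Lambda}$; one should also confirm the $4$-dimensional quaternionic subbundle containing $\gamma'$ is parallel along $\gamma$ (so that $I\gamma',J\gamma',K\gamma'$ can be used as a parallel-up-to-$SO(3)$-rotation frame, exactly as in the Lemma of Section~2) and that the remaining orthogonal complement is then also parallel, so the two curvature sums $12\k$ and $4(m-1)\k$ decouple cleanly. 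Everything else is a verbatim transcription of the K\"ahler argument of \cite{LW21TAMS} with $(2(m-1),1)$ replaced by $(4(m-1),3)$ and $\k_1=\k_2=\k$.
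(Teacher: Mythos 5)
Your proposal follows the same two-step scheme as the paper: first a Laplace comparison theorem for $d(\cdot,\partial M)$ proved via a second-variation/index-form argument with the normal frame split into the $3$ quaternionic directions (test function built from $C_{4\kappa,\Lambda}$) and the remaining $4m-4$ directions (test function built from $C_{\kappa,\Lambda}$), and then a Barta-type comparison of $v\circ d(\cdot,\partial M)$ against the one-dimensional model eigenfunction. The worry you raise in the ``main obstacle'' paragraph is resolved exactly as you suspect: part (2) of the definition of a quaternion-K\"ahler structure makes the rank-$3$ bundle $V\subset\operatorname{End}(TM)$ parallel, hence $\operatorname{span}\{I\gamma',J\gamma',K\gamma'\}$ is parallel along $\gamma$, and the Lemma at the end of Section~2 gives $\sum R(\gamma',e_i,\gamma',e_i)=12\kappa$ for the parallel-transported frame $e_2,e_3,e_4$ even though these need not coincide with $I\gamma',J\gamma',K\gamma'$ at later times.
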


%Theorem \ref{Thm Dirichlet} can be viewed as a quaternion-K\"ahler counterpart of the well-known result for Riemannian manifolds obtained by Li and Yau \cite{LY80} and Kause \cite{Kasue84}, as well as the K\"ahler analogue proved by the authors in \cite[Theorem 1.4]{LW21TAMS}. 
\begin{remark}
When the boundary is convex, namely $\Lambda=0$, it is easily seen that we have 
\begin{equation*}
  \bar{\l}_1(m,\k,0,R) =\bar{\mu}_1(m,\k,R).
\end{equation*}
Thus we can obtain the same explicit lower bounds as in Proposition \ref{prop 1.2} for $\bar{\l}_1(m,\k,0,R)$. 
\end{remark}
\begin{remark}
With the help of a generalized Barta's inequality for the $p$-Laplacian (see \cite[Theorem 3.1]{LW20a}), the same argument here indeed yields such lower bounds for the first Dirichlet eigenvalue of the $p$-Laplacian for all $1<p<\infty$. 
\end{remark}

The proof of Theorem \ref{Thm Dirichlet} relies on a comparison theorem for the second derivatives of $d(x,\p M)$.

\begin{thm}\label{Thm comparison distance to boundary}
Let $(M^m,g,I,J,K)$ be a compact quaternion-K\"ahler manifold with smooth nonempty boundary $\p M$. 
Suppose that  the scalar curvature of $M$ is bounded from below by $16m(m+2)\k$ for some $\k \in \R$, and the second fundamental form on $\p M$ is bounded from below by $\Lambda \in \R$. 
Let $\vp:[0, R ] \to \R_+$ be a smooth function with $\vp' \geq 0$. Then the function $v(x) =\vp\left(d(x,\p M)\right)$ is a viscosity supersolution of 
\begin{equation*}
    Q[v] =\left. \left[\a (\vp')\vp'' -\b(\vp')\vp'\left(4(m-1)T_{\k, \Lambda}+3T_{4\k, \Lambda} \right) \right] \right|_{d(x,\p M)},
\end{equation*}
on $M$, where $Q$ is the operator defined in \eqref{eq isotropic}.
\end{thm}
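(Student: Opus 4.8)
The plan is to prove Theorem~\ref{Thm comparison distance to boundary} by a Fermi-coordinate computation along the minimizing geodesic from a point to the boundary, mimicking the structure of the modulus of continuity argument in Theorem~\ref{thm MC QK} but now in the one-sided (supersolution) setting. First I would recall that since $d(\cdot,\partial M)$ is Lipschitz and smooth away from the cut locus, it suffices, in the viscosity sense, to check the differential inequality at points $x_0$ where a smooth function $\psi$ touches $v$ from below; at such a point there is a unit-speed minimizing geodesic $\gamma:[0,\ell]\to M$ with $\gamma(0)=p\in\partial M$, $\gamma(\ell)=x_0$, $\ell=d(x_0,\partial M)$, meeting $\partial M$ orthogonally, and $\gamma$ contains no focal point of $\partial M$ in its interior. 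Along $\gamma$ I would set up quaternionic Fermi coordinates: parallel transport an orthonormal frame $\{e_i(s)\}_{i=1}^{4m}$ with $e_1(s)=\gamma'(s)$, and arrange $e_2(\ell)=I\gamma'(\ell)$, $e_3(\ell)=J\gamma'(\ell)$, $e_4(\ell)=K\gamma'(\ell)$ (as in the proof of Theorem~\ref{thm MC QK}, noting $I\gamma',J\gamma',K\gamma'$ need not be parallel). Then $\nabla v(x_0) = \varphi'(\ell)\,e_1(\ell)$ and, for the Hessian, $v_{11}(x_0)\le\varphi''(\ell)$ while for $i\ge 2$ one estimates $v_{ii}(x_0)$ from above using the second variation of arc length with a Jacobi-field-type test vector field $V_i(s)=h_i(s)e_i(s)$ satisfying $h_i(0)=1$ (to account for the boundary term via the second fundamental form bound $\mathrm{I\!I}\ge\Lambda$) and $h_i(\ell)$ free.

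The core computation is the index-form inequality: for each normal direction $e_i$, choosing $h_i$ to solve $h_i''+\kappa_i h_i=0$ with $h_i(0)=1$, $h_i'(0)=-\Lambda$ — i.e. $h_i = C_{\kappa_i,\Lambda}$ in the notation of \eqref{C def} — and using the second fundamental form lower bound at $s=0$ together with the curvature lower bounds, one gets
\begin{equation*}
    v_{ii}(x_0) \le -\,\varphi'(\ell)\,T_{\kappa_i,\Lambda}(\ell)
\end{equation*}
after the boundary term $-\mathrm{I\!I}(e_i,e_i)+\Lambda\le 0$ is discarded, provided $C_{\kappa_i,\Lambda}>0$ on $[0,\ell]$ (which holds because no focal point occurs). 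The quaternion-K\"ahler input is exactly the curvature identities of Section~2: summing over $i=2,3,4$ and using $\sum_{i=2}^4 R(e_1,e_i,e_1,e_i)\ge 12\kappa$ (Proposition~\ref{prop QK}), with $h_i=C_{4\kappa,\Lambda}$, yields $\sum_{i=2}^4 v_{ii}(x_0)\le -3\varphi'(\ell)T_{4\kappa,\Lambda}(\ell)$; summing over $5\le j\le 4m$ with $h_j=C_{\kappa,\Lambda}$ and using $\sum_{j=5}^{4m}R(e_1,e_j,e_1,e_j)\ge 4(m-1)\kappa$ yields $\sum_{j=5}^{4m} v_{jj}(x_0)\le -4(m-1)\varphi'(\ell)T_{\kappa,\Lambda}(\ell)$. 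Assembling,
\begin{equation*}
    Q[v](x_0) = \alpha(\varphi')v_{11} + \beta(\varphi')\sum_{i=2}^{4m} v_{ii} \ge \alpha(\varphi')\varphi'' - \beta(\varphi')\varphi'\bigl(4(m-1)T_{\kappa,\Lambda} + 3T_{4\kappa,\Lambda}\bigr)\Big|_{d(x_0,\partial M)},
\end{equation*}
which is the desired supersolution inequality. Here I would use $\beta>0$ and $\varphi'\ge 0$ to keep the inequality direction correct.

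I would also address the degenerate points of $v$: if $\varphi'(\ell)=0$ the normal-direction terms drop out and only the $v_{11}\le\varphi''$ estimate is needed, which is immediate; and where $d(\cdot,\partial M)$ fails to be smooth (cut locus of $\partial M$) the viscosity formulation handles it automatically, since at such a point no smooth function can touch $v=\varphi\circ d$ from below with the "wrong" second-order behavior — more precisely, the standard fact that $d(\cdot,\partial M)$ is a viscosity supersolution of the relevant comparison inequality localizes the check to the good points described above; I would cite the analogous treatment in \cite{LW21TAMS} and \cite{AC13}. The main obstacle I anticipate is bookkeeping the boundary term in the second variation correctly — ensuring that the $g(T,\nabla_r\gamma_r)$ endpoint contribution at $s=0$ combines with the choice $h_i'(0)=-\Lambda$ to produce exactly $-(\mathrm{I\!I}(e_i,e_i)-\Lambda)h_i(0)^2\le 0$, so that it can be dropped — and verifying that $C_{\kappa,\Lambda}$ and $C_{4\kappa,\Lambda}$ stay positive on $[0,R]$ (equivalently $R$ does not exceed the first focal radius), which is what makes $T_{\kappa,\Lambda}$ and $T_{4\kappa,\Lambda}$ well-defined along $\gamma$; this positivity is forced by the absence of interior focal points on a minimizing segment, but it should be stated explicitly. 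Once Theorem~\ref{Thm comparison distance to boundary} is in hand, Theorem~\ref{Thm Dirichlet} follows by the now-standard Barta-type argument: take $\varphi$ the first eigenfunction of \eqref{eq 1.5}, set $v=\varphi\circ d(\cdot,\partial M)$, and use the comparison (with $\alpha=\beta=1$) together with the variational characterization of $\lambda_1$, exactly as in \cite{LW21TAMS}.
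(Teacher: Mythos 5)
Your proposal matches the paper's argument in all essential respects: the viscosity reduction to a smooth test function touching $v$ from below, the construction of a smooth upper barrier for $d(\cdot,\p M)$ along the minimizing geodesic, the quaternionic Fermi frame with $e_2,e_3,e_4$ aligned to $I\gamma',J\gamma',K\gamma'$ at the interior endpoint, the index-form estimate with test profiles $C_{4\k,\Lambda}$ on the quaternionic block and $C_{\k,\Lambda}$ on the orthogonal block, the curvature input from Proposition~\ref{prop QK}, and the boundary term absorbed by the second-fundamental-form lower bound. The only differences are cosmetic (you parametrize the geodesic from $\p M$ to $x_0$ rather than the reverse, and you sketch rather than explicitly construct the upper support function $\bar d$ as the paper does following \cite{Wu79, AX19}); note also that the paper dispenses with the $\vp'=0$ degeneracy simply by approximating with $\vp'>0$, which is a bit cleaner than your case split, and that the normalization of $h_i$ should be taken so that the variation field equals $e_i$ at $x_0$ (equivalently, divide the index form by $h_i(\ell)^2$), which your choice $h_i(0)=1$ leaves implicit.
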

\begin{proof}[Proof of Theorem \ref{Thm comparison distance to boundary}]
By approximation, it suffices to consider the case $\vp' >0$ on $[0,  R]$. 
By definition of viscosity solutions (see \cite{CIL92}), it suffices to prove that for any smooth function $\psi$ touching $v$ from below at $x_0 \in M$, i.e., 
\begin{align*}
    \psi(x) \leq v(x) \text{ on } M, \text{\quad}
    \psi(x_0) &= v(x_0),
\end{align*}
it holds that
\begin{equation*}
    Q[\psi](x_0) \leq \left. \left[\a (\vp')\vp'' -\b(\vp')\vp'\left(4(m-1)T_{\k, \Lambda}+3T_{4\k, \Lambda} \right) \right] \right|_{d(x_0,\p M)}.
\end{equation*}
Since the function $d(x, \p M)$ may not be smooth at $x_0$, so we need to replace it by a smooth function $\bar{d}(x)$ defined in a neighborhood $U(x_0)$ of $x_0$ satisfying $\bar{d}(x) \geq d(x, \p M)$ for $x \in U(x_0)$ and $\bar{d}(x_0)=d(x_0, \p M)$. 
The construction is standard (see e.g. \cite[pp. 73-74]{Wu79} or \cite[pp. 1187]{AX19}), which we state below for reader's convenience. 

Since $M$ is compact, there exists $y_0 \in \p M$ such that $$d(x_0,y_0)=d(x_0, \p M):=s_0.$$ Let $\gamma:[0,s_0] \to M$ be the unit speed length-minimizing geodesic with $\gamma(0)=x_0$ and $\gamma(s_0)=y_0$.  
Choose an orthonormal basis $\{e_i\}_{i=1}^{4m}$ for $T_{x_0}M$ with
\begin{equation*}
    e_1=\gamma_0'(0), e_2=I\gamma_0'(0), e_3=J\gamma_0'(0), e_4=K\gamma_0'(0),
\end{equation*}
and parallel transport it along $\gamma_0$ to produce an orthonormal basis $\{e_i(s)\}_{i=1}^{4m}$ for $T_{\gamma_0(s)}M$ with $e_1(s)=\gamma_0'(s)$ for each $s\in [0,s_0]$.

For any vector $X \in \exp^{-1}_{x_0}U(x_0)$, let $X(s), s\in [0,s_0]$ be the vector field obtained by parallel translating $X$ along $\gamma$, and decompose it as 
\begin{equation*}
    X(s) =a X_1 (s) +b \gamma'(s)+cX_2 (s) ,
\end{equation*}
where $a, b$ and $c$ are constants along $\gamma$ with $a^2+b^2+c^2 =|X|^2$, and $X_1(s)$ and $X_2(s)$ are unit components in $\operatorname{span}\{e_5(s),
\cdots,e_{4m}(s)\}$ and 
$\operatorname{span}\{e_2(s),
e_3(s),e_4(s)\}$. Define 
\begin{equation*}
    W(s)=a \, \eta(s) X_1(s) + b\left(1-\frac{s}{s_0} \right)\gamma'(s)+c\, \zeta(s)X_2(s),
\end{equation*}
where $\eta, \zeta:[0,s_0] \to \R_+$ are two $C^2$ functions to be chosen later. 
Next we define the $4m$-parameter family of curves $\gamma_X :[0,s_0] \to M$ such that 
\begin{enumerate}
    \item $\gamma_0 =\gamma$;
    \item $\gamma_X(0) =\exp_{x_0}(W(0))$ and $\gamma_X(s_0) \in \p M$;
    \item $W(s)$ is induced by the one-parameter family of curves $\varepsilon \to \gamma_{\varepsilon X}(s)$ for $\varepsilon \in (-\varepsilon_0,\varepsilon_0)$ and $s\in [0,s_0]$;
    \item $\gamma_X$ depends smoothly on $X$.
\end{enumerate}
Finally let $\bar{d}(x)$ be the length of the curve $\gamma_X$ where $x=\exp_{x_0}(X) \in U(x_0)$. 
Then we have
$\bar{d}(x) \geq d(x,\p M)$ on $U(x_0)$, $\bar{d}(x_0)=d(x_0, \p M)$. 

Recall the  first and second variation formulas:
\begin{equation}\label{1st bard}
\n \bar{d} =-e_1(0),
\end{equation}
and 
\begin{eqnarray*}
   && \n^2 \bar{d} (X,X)\\
    &=& -A\Big(a \eta (s_0)X_1(s_0)+c\zeta(s_0)X_2(s_0),a \eta (s_0)X_1(s_0)+c\zeta(s_0)X_2(s_0)\Big) \\
    && + \int_0^{s_0} \left(a^2(\eta')^2+c^2(\zeta')^2 -R(a \eta X_1+c\zeta X_2, \gamma',a \eta X_1+c\zeta X_2, \gamma') \right) ds
\end{eqnarray*}
where $A$ denotes the second fundamental form of $\p M$ at $y_0$. Here and below the derivatives of $\bar{d}$ are all evaluated at $x_0$.
 Then we have 
\begin{equation}\label{2nd bardn}
\n^2 \bar{d} (e_1(0),e_1(0))=0.
\end{equation}
For $ 2 \le i\le 4$, we obtain by choosing $\zeta(s)=C_{4\kappa, \Lambda}(s_0-s) /C_{4\kappa, \Lambda}(s_0)$  that 
\begin{eqnarray}%\label{2nd bardi}
    && \n^2 \bar{d} (e_i(0),e_i(0)) \\ \nonumber 
    &=& - \zeta^2 (s_0) A(e_i(s_0), e_i(s_0)) + \int_0^{s_0} (\zeta')^2 -\zeta^2 R( e_i, \gamma', e_i,\gamma') \, ds \\ \nonumber
    &\le & -\frac{\Lambda}{C_{4\k, \Lambda}^2 (s_0)} +\int_0^{s_0} (\zeta')^2 -\zeta^2 R( e_i, \gamma', e_i,\gamma') \, ds\nonumber
\end{eqnarray}
Summing over $2\leq i \leq 4$  gives 
\begin{eqnarray}%\label{2nd bardi} 
     && \sum_{i=2}^{4} \n^2 \bar{d} (e_i(0),e_i(0)) \\ \nonumber 
     &\le & - \frac{3\Lambda}{C_{4\k, \Lambda}^2 (s_0)} + \int_0^{s_0} 3(\zeta')^2 -\zeta^2 \sum_{i=2}^4R( e_i, \gamma', e_i,\gamma') \, ds \\\nonumber
      &\le & - \frac{3\Lambda}{C_{4\k, \Lambda}^2 (s_0)} + \int_0^{s_0} 3(\zeta')^2 -12\k \zeta^2  \, ds \\\nonumber
    &=&  -3T_{4\k, \Lambda}(s_0). \nonumber
\end{eqnarray}
For $5\le i \le 4m$, we have 
\begin{eqnarray*}
    \n^2 \bar{d} (e_i(0),e_i(0)) 
    =- \eta^2 (s_0) A(e_i(s_0), e_i(s_0)) + \int_0^{s_0} (\eta')^2 -\eta^2 R(e_i, \gamma',e_i, \gamma') \, ds.
\end{eqnarray*}
Summing over $5\leq i \leq 4m$ and choosing $\eta(s)=C_{\k, \Lambda}(s_0-s) /C_{\k, \Lambda}(s_0)$  gives 
\begin{eqnarray}%\label{2nd bardi} 
     && \sum_{i=5}^{4m} \n^2 \bar{d} (e_i(0),e_i(0)) \\ \nonumber 
     &=& - \frac{\sum_{i=5}^{4m}A(e_i(s_0), e_i(s_0))}{C_{\k, \Lambda}^2 (s_0)} +    \int_0^{s_0} 4(m-1)(\eta')^2  -\eta^2 \sum_{i=5}^{4m}R( e_i, \gamma', e_i,\gamma') \, ds  \\\nonumber
    &\leq& - \frac{4(m-1)\Lambda}{C_{\k, \Lambda}^2 (s_0)} +4(m-1)\int_0^{s_0} (\eta')^2 -\k \eta^2  \, ds \\\nonumber
    &=& -4(m-1)T_{\k, \Lambda}(s_0). \nonumber
\end{eqnarray}
Since the function $\psi(x) -\vp\left(d(x, \p M)\right)$ attains its maximum at $x_0$ and $\vp' >0$, it follows that the function $\psi(x) -\vp(\bar{d}(x))$ attains a local maximum at $x_0$. The first and second derivative tests yield
$$ \n  \psi (x_0) =-\vp' e_1 (0), \quad \psi_{11}(x_0)  \leq  \vp'',$$
    and
    $$
    \psi_{ii}(x_0)  \leq \vp' \n^2\bar{d} \left(e_i(0), e_i(0)\right)
$$
for $2 \le i \leq 4m$, where we used (\ref{1st bard}) and (\ref{2nd bardn}).
Here and below the derivatives of $\vp$ are all evaluated at $s_0=d(x_0, \p M)$. 
%It then follows from (\ref{2nd bardi}) that
%\begin{eqnarray*}
%\sum_{i=2}^{2m} \n^2 \bar{d}\left(e_i(0), e_i(0)\right) &=&- \eta(s_0)^2 H(y_0) + \int_0^{s_0} (\eta')^2 -\eta^2 R(J\gamma', \gamma',J\gamma', \gamma) ds \\
%&& + \int_0^{s_0}2(m-1)(\eta')^2 -\eta^2 \Ric^\perp(\gamma', \gamma')  \, ds,
%\end{eqnarray*}
Thus we have 
\begin{eqnarray}\label{eq 3.1} 
     Q[\psi](x_0) &=& \a (\vp')\psi_{11} +\b(\vp') \sum_{i=2}^{4m} \psi_{ii} \\\nonumber
    &\leq& \a (\vp')\vp'' +\b(\vp')\vp'\left(\sum_{i=2}^{4m}  \n^2 \bar{d}(e_i(0),e_i(0)) \right) \\
    &\leq & \a (\vp')\vp'' -\b(\vp')\vp'\big(4(m-1)T_{\k, \Lambda}+3T_{4\k, \Lambda} \big).\nonumber
\end{eqnarray}
The proof is complete. 
\end{proof}
By choose $\vp(s)=s$, we have
\begin{corollary}\label{Cor 6.2}
Let $(M^m,g,I,J,K)$ be a compact quaternion-K\"ahler manifold with smooth nonempty boundary $\p M$. 
Suppose that  the scalar curvature of $M$ is bounded from below by $16m(m+2)\k$ for some $\k \in \R$, and the second fundamental form on $\p M$ is bounded from below by $\Lambda \in \R$. 
Then 
\begin{equation*}
  \Delta d(x,\p M)\le  -4(m-1)T_{\k, \Lambda}(d(x,\p M))-3T_{4\k, \Lambda} (d(x,\p M))
\end{equation*}
in the viscosity sense.
\end{corollary}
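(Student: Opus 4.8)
The plan is to obtain this as an immediate specialization of Theorem \ref{Thm comparison distance to boundary}. As the displayed line preceding the statement suggests, I would apply that theorem with the coefficient functions $\a \equiv 1$, $\b \equiv 1$ and the linear profile $\vp(s) = s$. With $\a = \b \equiv 1$ the operator $Q$ from \eqref{eq isotropic} reduces to $Q[u] = \delta_{ij}\n_i \n_j u = \Delta u$, and $\vp(s) = s$ is smooth on $[0,R]$ with $\vp' \equiv 1 \geq 0$, so the hypotheses of Theorem \ref{Thm comparison distance to boundary} are satisfied (indeed $\vp' > 0$, so even the approximation step in that proof is vacuous here). For these choices $v(x) = \vp(d(x,\p M)) = d(x,\p M)$.

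Next I would simplify the right-hand side in the conclusion of Theorem \ref{Thm comparison distance to boundary}: since $\vp'' \equiv 0$ the term $\a(\vp')\vp''$ vanishes, while $\b(\vp')\vp' \equiv 1$, so the theorem asserts that $d(x,\p M)$ is a viscosity supersolution of
\begin{equation*}
    \Delta v = -\big(4(m-1)T_{\kappa,\Lambda}(d(x,\p M)) + 3 T_{4\kappa,\Lambda}(d(x,\p M))\big) .
\end{equation*}
Finally I would unwind the definition of viscosity supersolution (see \cite{CIL92}): this says precisely that whenever a smooth $\psi$ touches $d(\cdot,\p M)$ from below at a point $x_0$, one has $\Delta\psi(x_0) \leq -4(m-1)T_{\kappa,\Lambda}(d(x_0,\p M)) - 3T_{4\kappa,\Lambda}(d(x_0,\p M))$, which is exactly the assertion that
\begin{equation*}
    \Delta d(x,\p M) \leq -4(m-1)T_{\kappa,\Lambda}(d(x,\p M)) - 3T_{4\kappa,\Lambda}(d(x,\p M))
\end{equation*}
in the viscosity sense.

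There is no genuine obstacle in this corollary; it is a one-line consequence of the comparison theorem already proved. The only point requiring a little care is bookkeeping of the sign conventions, namely checking that being a viscosity supersolution of the \emph{equality} equation $Q[v] = (\cdots)$ is equivalent to the one-sided inequality $\Delta d(x,\p M) \leq (\cdots)$ in the viscosity sense, and that the identification $Q = \Delta$ when $\a = \b \equiv 1$ is used consistently.
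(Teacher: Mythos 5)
Your proposal is correct and is exactly the paper's argument: the paper states the corollary follows immediately from Theorem \ref{Thm comparison distance to boundary} by choosing $\vp(s)=s$, and your specialization to $\a\equiv\b\equiv 1$ (so that $Q=\Delta$) together with $\vp'\equiv 1$, $\vp''\equiv 0$ is precisely the intended one-line deduction.
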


%\section{First Dirichlet Eigenvalue}
\begin{proof}[Proof of Theorem \ref{Thm Dirichlet}]
The proof is a slight modification of the K\"ahler case presented in \cite[Section 7]{LW21TAMS}, so we omit the details. 
\end{proof}

\bibliographystyle{alpha}
\bibliography{ref}

\end{document}